\journal{}
\date{}
\newtheorem{theorem}{Theorem}[section]
\newtheorem{lemma}[theorem]{Lemma}
\newtheorem{corollary}[theorem]{Corollary}
\newtheorem{conjecture}[theorem]{Conjecture}
\begin{document}

\begin{frontmatter}

\title{Maxima of spectral radius of irregular graphs with given maximum degree}

\author{Jie Xue}\ead{xuejie@zzu.edu.cn}
\author{Ruifang Liu\corref{cor1}}\ead{rfliu@zzu.edu.cn}
\address{School of Mathematics and Statistics, Zhengzhou University, Zhengzhou, Henan 450001, China}
\cortext[cor1]{Corresponding author.}

\begin{abstract}
Let $\lambda^{*}$ be the maximum spectral radius of connected irregular graphs on $n$ vertices with maximum degree $\Delta$.
Liu, Shen and Wang (2007) conjectured that $\lim_{n\rightarrow \infty}(n^{2}(\Delta-\lambda^{*}))/(\Delta-1)=\pi^{2},$
which describes the asymptotic behavior for the maximum spectral radius of irregular graphs.
Focusing on this conjecture, we consider the maximum spectral radius of connected subcubic bipartite graphs.
 The unique connected subcubic bipartite graph with the maximum spectral radius is determined.
Let $G$ be a $k$-connected irregular graph with spectral radius $\lambda_{1}(G)$, we present a lower bound for $\Delta-\lambda_{1}(G)$.
Moreover, if $H$ is a proper subgraph of a $k$-connected $\Delta$-regular graph, a lower bound for $\Delta-\lambda_{1}(H)$ is also obtained.
These bounds improve some previous results.
\end{abstract}

\begin{keyword}
Spectral radius\sep Maximum degree\sep Irregular graph\sep Extremal graph
\MSC 05C50
\end{keyword}

\end{frontmatter}

\section{Introduction}
Let $G$ be a graph with the vertex set $V(G)=\{v_{1},\ldots, v_{n}\}$ and the dege set $E(G)=\{e_{1},\ldots,e_{m}\}$.
The numbers $|V(G)|$ and $|E(G)|$ are called the order and size of $G$, respectively.
For any vertex $v\in V(G)$, the set of the vertices adjacent to $v$ is denoted by $N_{G}(v)$ (or simply $N(v)$).
The number of vertices in $N(v)$, denoted by $d_{G}(v)$, is the degree of $v$.
Let $A(G)=(a_{i,j})$ be the adjacency matrix of $G$, where $a_{i,j}=1$ if $v_{i}$ is adjacent to $v_{j}$, and $a_{i,j}=0$ otherwise.
The eigenvalues of $A(G)$ are denoted by $\lambda_{1}(G)\geq \lambda_{2}(G)\geq \cdots\geq \lambda_{n}(G)$.
As usual, we call $\lambda_{1}(G)$ the spectral radius of $G$.

Let $S$ be a given set of graphs. A graph $G\in S$ is called a maximal graph in $S$
if $\lambda_{1}(G)\geq \lambda_{1}(H)$ for any graph $H\in S$.
Let $\mathcal{F}(n,\Delta)$ denote the set of all connected irregular graphs on $n$ vertices with maximum degree $\Delta$.
Liu, Shen and Wang \cite{Liu2007} proposed the following conjecture concerning the spectral radius of the maximal graph in $\mathcal{F}(n,\Delta)$.

\begin{conjecture}{\rm (Liu, Shen and Wang \cite{Liu2007})}
Let $\lambda^{*}$ be the spectral radius of a maximal graph in $\mathcal{F}(n,\Delta)$. For each fixed $\Delta,$
$$\lim_{n\rightarrow \infty}\frac{n^{2}(\Delta-\lambda^{*})}{\Delta-1}=\pi^{2}.$$
\end{conjecture}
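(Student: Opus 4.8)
The plan is to prove the conjecture by establishing the two one-sided bounds $\limsup_{n}\frac{n^{2}(\Delta-\lambda^{*})}{\Delta-1}\le\pi^{2}$ and $\liminf_{n}\frac{n^{2}(\Delta-\lambda^{*})}{\Delta-1}\ge\pi^{2}$ separately, working throughout from the variational identity
\[
\Delta-\lambda_{1}(G)=\min_{x\neq0}\frac{\sum_{uv\in E(G)}(x_{u}-x_{v})^{2}+\sum_{v}\bigl(\Delta-d_{G}(v)\bigr)x_{v}^{2}}{\sum_{v}x_{v}^{2}},
\]
which recasts the problem as an optimal discrete Poincar\'e inequality for $\Delta I-A(G)$.

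\emph{The $\limsup$ bound.} I would exhibit an explicit near-extremal family $G_{n}\in\mathcal{F}(n,\Delta)$. Write $n=(\Delta+1)k$, take $k$ disjoint copies $B_{1},\dots,B_{k}$ of $K_{\Delta+1}$ with one edge deleted, let $p_{j},r_{j}$ be the two vertices of $B_{j}$ that then have degree $\Delta-1$, and add the edges $r_{j}p_{j+1}$ for $1\le j\le k-1$; the result is connected and irregular, with every vertex of degree $\Delta$ except $p_{1},r_{k}$ of degree $\Delta-1$ (if $\Delta+1\nmid n$, append a bounded tail, which does not affect the asymptotics). The naive block-constant test vector $\sin\!\big(\tfrac{\pi j}{k+1}\big)$ on $B_{j}$ only gives $\Delta-\lambda_{1}(G_{n})\le\frac{(\Delta+1)\pi^{2}}{n^{2}}(1+o(1))$, off by a factor $\frac{\Delta+1}{\Delta-1}$; the key is to perturb the two junction vertices of each block by $\tfrac{1}{\Delta+1}$ times the local increment, pulling $x_{r_{j}}$ toward $x_{p_{j+1}}$ and $x_{p_{j}}$ toward $x_{r_{j-1}}$. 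Optimizing this correction lowers the cost of each block-to-block transition from $\delta_{j}^{2}$ to $\tfrac{\Delta-1}{\Delta+1}\delta_{j}^{2}$, where $\delta_{j}=\sin\!\big(\tfrac{\pi(j+1)}{k+1}\big)-\sin\!\big(\tfrac{\pi j}{k+1}\big)$, and summing yields $\Delta-\lambda_{1}(G_{n})\le\tfrac{\Delta-1}{(\Delta+1)^{2}}\cdot\tfrac{\pi^{2}}{(k+1)^{2}}(1+o(1))=\tfrac{(\Delta-1)\pi^{2}}{n^{2}}(1+o(1))$. (Equivalently: the associated infinite periodic chain has top Bloch band $\Delta-\tfrac{\Delta-1}{(\Delta+1)^{2}}q^{2}+O(q^{4})$, and the two end-defects quantize the momentum to $\approx\pi/(k+1)$.) Hence $\limsup_{n}\frac{n^{2}(\Delta-\lambda^{*})}{\Delta-1}\le\pi^{2}$. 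For $\Delta=2$ this family is just $P_{n}$, recovering the exact value; the subcubic bipartite case treated in this paper is the regime in which triangle-freeness forbids these $K_{4}$-minus-an-edge blocks, so determining the extremal graph there measures exactly the cost of the bipartite restriction.

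\emph{The $\liminf$ bound, and the main obstacle.} Here one must show $\Delta-\lambda_{1}(G)\ge(1-o(1))\tfrac{(\Delta-1)\pi^{2}}{n^{2}}$ for \emph{every} $G\in\mathcal{F}(n,\Delta)$. We may assume $\Delta-\lambda_{1}(G)\to0$, since otherwise the bound is trivial; then, taking $x$ to be the Perron vector in the identity above, $\sum_{uv\in E}(x_{u}-x_{v})^{2}=o\!\big(\sum_{v}x_{v}^{2}\big)$, so $x$ varies slowly across $G$, while irregularity fixes a vertex $w$ with $\Delta-d_{G}(w)\ge1$, making the numerator at least $\sum_{uv\in E}(x_{u}-x_{v})^{2}+x_{w}^{2}$. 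Using the slow variation of $x$, the bound $|E(G)|\le\Delta n/2$, and the known lower bounds on $\Delta-\lambda_{1}(G)$ for irregular graphs (together with the lower bounds for $k$-connected irregular graphs and for proper subgraphs of $\Delta$-regular graphs established in this paper), one aims to show that along the ``long direction'' over which $x$ must change, $G$ is effectively no wider than a single $K_{\Delta+1}$-block, and then to bound the numerator below by the continuous Wirtinger inequality $\int u'^{2}\ge\pi^{2}\int u^{2}$ on an interval of the appropriate length, rescaled by $\tfrac{\Delta-1}{(\Delta+1)^{2}}$. The main obstacle is precisely this width-versus-length trade-off: one must rule out graphs that push $\lambda_{1}$ slightly closer to $\Delta$ by being genuinely thicker, and it is there that the constant has to come out as $\Delta-1$ rather than something larger. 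Turning this heuristic into an optimal, uniform discrete isoperimetric inequality for $\Delta I-A(G)$ over all irregular $G$ is where essentially all the difficulty lies, and it is what keeps the conjecture open in general — the results of this paper being intended as inputs to exactly this step.
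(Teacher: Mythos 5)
The statement you are asked to prove is a \emph{conjecture}: the paper states it as an open problem of Liu, Shen and Wang and offers no proof of it; its own theorems (the determination of the subcubic bipartite maximal graph, and the lower bounds on $\Delta-\lambda_{1}$ for $k$-connected irregular graphs and for proper subgraphs of regular graphs) are explicitly presented as partial progress. So there is no proof in the paper to compare against, and your proposal does not close the gap either. Your $\limsup$ half is reasonable: exhibiting the chain of $K_{\Delta+1}$-minus-an-edge blocks and computing a Rayleigh quotient with a corrected sinusoidal test vector is a legitimate strategy, your per-transition constant $\tfrac{\Delta-1}{\Delta+1}$ is consistent with the exact $\Delta=2$ case ($P_{n}$), and the resulting $\Delta-\lambda_{1}(G_{n})\le(1+o(1))\tfrac{(\Delta-1)\pi^{2}}{n^{2}}$ matches the conjectured value. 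This half is a sketch (the optimality of the $\tfrac{1}{\Delta+1}$ junction perturbation and the accounting of the intra-block energy still need to be written out), but it is plausibly completable and is essentially the ``easy'' direction, since it only requires one good family.

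The genuine gap is the $\liminf$ direction, and you say so yourself. Showing $\Delta-\lambda_{1}(G)\ge(1-o(1))\tfrac{(\Delta-1)\pi^{2}}{n^{2}}$ for \emph{every} connected irregular $G$ with maximum degree $\Delta$ is the entire content of the conjecture, and your paragraph on it contains no argument: the claim that ``$G$ is effectively no wider than a single $K_{\Delta+1}$-block along the direction in which $x$ varies'' is precisely the width-versus-length isoperimetric statement one would need to prove, not a step toward proving it. Moreover, the results of this paper cannot serve as inputs to that step in the way you suggest: Theorem \ref{main1}, even in its strongest regime ($k=1$, $n\Delta-2m=1$), gives a lower bound of order $\tfrac{1}{n^{2}}$ \emph{without} the factor $(\Delta-1)\pi^{2}$, i.e.\ it is off from the conjectured truth by a multiplicative constant of roughly $10(\Delta-1)$, and no amount of massaging a bound with the wrong constant will produce the sharp one. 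The honest conclusion is that your proposal proves (modulo routine verification) only that $\limsup_{n}n^{2}(\Delta-\lambda^{*})/(\Delta-1)\le\pi^{2}$, and leaves the conjecture exactly as open as the paper does.
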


Indeed, the only graph in $\mathcal{F}(n,2)$ is $P_{n}$, and its spectral radius is $2\cos(\frac{\pi}{n+1})$.
It is obvious that the conjecture holds for $\Delta=2$.
This conjecture describes the asymptotic behavior for the maximum spectral radius of irregular graphs.
To determine the asymptotic value for the maximum spectral radius, the key is to find the maximal irregular graph.
Inspired by the conjecture, we consider the maximum spectral radius of connected subcubic bipartite graphs.
Obviously, there are at least four vertices in a connected subcubic bipartite graph, and $K_{1,3}$ is the only one connected subcubic bipartite graph on four vertices.
For connected subcubic bipartite graphs with five vertices, it is easy to see that $K_{2,3}$ is the unique maximal graph.
For the remaining cases, we show that $B_{n}$ (defined in Section 2) is the unique maximal graph among all connected subcubic bipartite graphs.

\begin{theorem}\label{main3}
If $G$ is a connected subcubic bipartite graph on $n\geq 6$ vertices,
then $\lambda_{1}(G)\leq \lambda_{1}(B_{n})$, with equality if and only if $G\cong B_{n}$.
\end{theorem}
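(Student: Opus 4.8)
The plan is to analyse a maximal graph $G$ in the class directly, through its Perron eigenvector, and to successively pin down its structure with a sequence of local modifications, each of which keeps the graph in the class and does not decrease the spectral radius, until only $B_n$ can survive.

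First I would take a maximal $G$, fix a bipartition $V(G) = A \cup B$, put $\lambda = \lambda_1(G)$, and let $x = (x_v)$ be the positive Perron eigenvector normalised so that $\max_v x_v = 1$; throughout one exploits the identity $\lambda x_v = \sum_{u \sim v} x_u$. The first aim is to pin down the degree sequence of $G$. Since adding an edge to a connected graph strictly increases $\lambda_1$, if some non-adjacent $a \in A$, $b \in B$ satisfy $d_G(a), d_G(b) \le 2$ and $G + ab$ is still irregular, then $G + ab$ lies in the class and beats $G$, a contradiction; thus $G$ is edge-saturated in this sense. As bipartiteness forbids adding edges inside a colour class, edge-saturation by itself only forces $G$ to be almost cubic on one side, so I would strengthen it with Kelmans-type rotations --- delete $b a'$ and add $b a$ when $a b \notin E$, $d_G(a) \le 2$, and $x_a \ge x_{a'}$, always choosing the rotation so as to preserve connectivity and the bound $\Delta \le 3$. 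Such a rotation does not decrease $\lambda$, and iterating it forces $G$ to attain the maximum number of edges available to an irregular member of the class, i.e.\ degree sequence $(3^{\,n-2}, 2^{\,2})$ with one degree-$2$ vertex in each part when $n$ is even, and $(3^{\,n-3}, 2^{\,3})$ with the three degree-$2$ vertices in the larger part when $n$ is odd. The borderline case in which an addition or rotation would produce a $3$-regular graph must be handled separately; it arises only when $G$ is already a cubic bipartite graph minus a single edge, which already realises the above degree sequence, so it costs nothing.

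With the degree sequence fixed, it remains to decide which bipartite graph with it has the largest spectral radius. Here I would first exclude, by one more rotation, the configuration in which two degree-$2$ vertices are adjacent, so that $G$ is a cubic bipartite graph $H$ with a single edge $e$ deleted; then I would use the eigenvalue identity to order the $x$-values along $G$ and run further switching lemmas, each with its own equality analysis, to make $H$ and the location of $e$ canonical --- forcing $H$ to be ``path-like'' and $e$ to sit near the middle, the structure responsible for the $\pi^2(\Delta - 1)/n^2$ asymptotics predicted by the conjecture above --- and conclude $G \cong B_n$, with the uniqueness coming from the tracked equality cases. A bounded number of small orders, where ``path-like'' degenerates and the cubic bipartite graph on $n$ vertices is essentially forced, I would dispatch by hand, e.g.\ through characteristic-polynomial comparisons and the exact value of $\lambda_1(B_n)$ obtained from its equitable quotient matrix. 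The main obstacle, I expect, is the first stage: unlike in the non-bipartite world one cannot simply keep adding edges, so proving that a maximal graph already has the maximum number of edges needs rotation arguments that must at the same time keep the graph connected, keep $\Delta \le 3$, avoid accidentally creating a regular graph, and --- the genuinely delicate point --- guarantee a \emph{strict} increase of $\lambda$, which can fail only when certain Perron entries coincide, a degeneracy that has to be ruled out on its own. The secondary difficulty is the uniqueness statement: the family of bipartite graphs with the extremal degree sequence is large, and driving the switchings down to the single graph $B_n$ while controlling every equality case is where most of the effort lies.
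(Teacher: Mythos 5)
Your overall strategy (take a maximal graph, use the Perron eigenvector together with Kelmans-type $2$-switches that preserve the degree sequence and connectivity, first pin down the degree sequence and then the adjacency structure) is exactly the paper's approach, but two of your concrete structural targets are wrong, and the plan as written would therefore not converge to $B_n$. First, for odd $n$ the extremal degree sequence is \emph{not} $(3^{\,n-3},2^{\,3})$ with three degree-$2$ vertices in the larger part: if three degree-$2$ vertices lay in the same part, a switching argument forces them to share the same two neighbours, and those two neighbours then have degree exactly $3$, so a $K_{2,3}$ splits off as a component, contradicting connectivity for $n\geq 6$. The true extremal degree sequence for odd $n$ is $(3,\dots,3\mid 3,\dots,3,2,1)$ --- the maximal graph $B_n$ has a \emph{pendant} vertex, which your edge-maximisation heuristic cannot produce (both candidate sequences have the same number of edges, so ``maximum number of edges'' does not discriminate, and ruling out the pendant vertex is in fact impossible because it is present in the extremal graph). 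Second, in the even case your plan to ``exclude the configuration in which two degree-$2$ vertices are adjacent'' and reduce to a cubic bipartite graph minus an edge is backwards for $n\geq 8$: in $B_n$ with $n$ even the two degree-$2$ vertices $u_1,v_1$ \emph{are} adjacent (only $B_6=K_{3,3}-e$ has them non-adjacent), so $B_n$ is not of the form ``cubic bipartite minus one edge'' for $n\geq 8$, and a rotation designed to make the two unsaturated vertices non-adjacent would move you away from the extremal graph.

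Beyond these misidentifications, the genuinely hard step is only gestured at. After the degree sequence is known, the paper orders both parts by Perron weight, $x_{u_1}\leq\cdots\leq x_{u_s}$ and $x_{v_1}\leq\cdots\leq x_{v_s}$, and proves by induction on $i$ that $u_i\sim v_i$, $u_i\sim v_{i+1}$, $v_i\sim u_{i+1}$, with all the displayed inequalities strict up to index $i+1$; each step requires exhibiting a ``bad pair'' of independent edges whose switch keeps the graph connected, and the connectivity verification splits into several cases depending on whether the relevant shortest path passes through the vertices being rewired (this is where the auxiliary facts about cut edges and unsaturated vertices are needed). Your phrase ``run further switching lemmas to make $H$ and the location of $e$ canonical'' names the goal but supplies none of this, and since your target structure is not $B_n$ in either parity, the equality analysis you propose to track cannot deliver the uniqueness statement.
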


Given a connected graph $G$ with maximum degree $\Delta$.
It is well-known that $\lambda_{1}(G)=\Delta$ if $G$ is $\Delta$-regular, and $\lambda_{1}(G)<\Delta$ if $G$ is irregular.
It is natural to ask how small $\Delta-\lambda_{1}(G)$ can be when $G$ is irregular.
A lower bound for $\Delta-\lambda_{1}(G)$ was first given by Stevanovi\'{c}.

\begin{theorem}{\rm (Stevanovi\'{c} \cite{Stevanovic2004})}\label{th1}
  Let $G$ be a connected irregular graph of order $n$ and maximum degree $\Delta$. Then
  \begin{equation}\label{eq18}
    \Delta-\lambda_{1}(G)>\frac{1}{2n(n\Delta-1)\Delta^{2}}.
  \end{equation}
\end{theorem}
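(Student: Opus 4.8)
The plan is to argue through the Perron eigenvector of $A(G)$ together with the Rayleigh-quotient description of the spectral gap $\Delta-\lambda_{1}(G)$. Let $x=(x_{v})_{v\in V(G)}$ be the positive eigenvector for $\lambda_{1}(G)$, scaled so that $\max_{v\in V(G)}x_{v}=1$, and fix a vertex $w$ with $x_{w}=1$. A direct computation --- writing
$$\Delta\sum_{v\in V(G)}x_{v}^{2}-\sum_{uv\in E(G)}2x_{u}x_{v}=\sum_{uv\in E(G)}(x_{u}-x_{v})^{2}+\sum_{v\in V(G)}\bigl(\Delta-d_{G}(v)\bigr)x_{v}^{2}$$
and using $\sum_{uv\in E(G)}2x_{u}x_{v}=\lambda_{1}(G)\sum_{v\in V(G)}x_{v}^{2}$ --- gives the identity
$$\Delta-\lambda_{1}(G)=\frac{\displaystyle\sum_{uv\in E(G)}(x_{u}-x_{v})^{2}+\sum_{v\in V(G)}\bigl(\Delta-d_{G}(v)\bigr)x_{v}^{2}}{\displaystyle\sum_{v\in V(G)}x_{v}^{2}}.$$
Since $G$ is irregular there is a vertex $u_{0}$ with $d_{G}(u_{0})\le\Delta-1$, so the second sum in the numerator is at least $x_{u_{0}}^{2}$, while the denominator is at most $n$ because each $x_{v}\le 1$.

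It remains to bound $\sum_{uv\in E(G)}(x_{u}-x_{v})^{2}$ from below in terms of $x_{u_{0}}$, and I would do this along a shortest $w$--$u_{0}$ path $P$; since $G$ is connected, $P$ has at most $n-1$ edges. Telescoping $1-x_{u_{0}}=x_{w}-x_{u_{0}}$ as the sum of the differences $x_{u}-x_{v}$ over the edges of $P$ and applying the Cauchy--Schwarz inequality gives
$$\sum_{uv\in E(G)}(x_{u}-x_{v})^{2}\ \ge\ \sum_{uv\in E(P)}(x_{u}-x_{v})^{2}\ \ge\ \frac{(1-x_{u_{0}})^{2}}{n-1}.$$
Hence the numerator of the displayed identity is at least $\dfrac{(1-x_{u_{0}})^{2}}{n-1}+x_{u_{0}}^{2}$, and minimizing the convex quadratic $t\mapsto \dfrac{(1-t)^{2}}{n-1}+t^{2}$ over $t\in[0,1]$ --- the minimum is attained at $t=1/n$ and equals $1/n$ --- shows the numerator is at least $1/n$.

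Combining the bounds yields $\Delta-\lambda_{1}(G)\ge 1/n^{2}$, which already implies \eqref{eq18}: a connected irregular graph has $\Delta\ge 2$, so $2n(n\Delta-1)\Delta^{2}\ge 8n(2n-1)>n^{2}$ and therefore $1/n^{2}>\dfrac{1}{2n(n\Delta-1)\Delta^{2}}$. The argument has no real obstacle; the one point that needs a moment's care is the balancing step --- one must keep \emph{both} the Dirichlet-energy term $\sum(x_{u}-x_{v})^{2}$ and the single deficient-vertex term $\bigl(\Delta-d_{G}(u_{0})\bigr)x_{u_{0}}^{2}$ and optimize the trade-off between them, since dropping either one is too lossy. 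That the resulting order $n^{-2}$ cannot be improved is shown by $G=P_{n}$, for which $\Delta-\lambda_{1}(P_{n})=2-2\cos\frac{\pi}{n+1}\sim \pi^{2}/n^{2}$, in line with the conjecture recalled above.
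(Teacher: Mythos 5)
Your proof is correct, and it is worth noting up front that the paper itself gives no proof of this statement --- it is quoted from Stevanovi\'{c}'s 2004 note --- so the only meaningful comparison is with the original argument and with the technique this paper uses elsewhere. Your route is essentially the Cioab\u{a}--Gregory--Nikiforov style argument, and it is the same machinery the authors deploy in Section 3 for Theorems \ref{main1} and \ref{main2}: the identity $\Delta-\lambda_{1}(G)=\bigl(\sum_{v}(\Delta-d_{G}(v))x_{v}^{2}+\sum_{uv\in E}(x_{u}-x_{v})^{2}\bigr)/\sum_{v}x_{v}^{2}$, a Cauchy--Schwarz telescoping along a path from the maximal-entry vertex to a deficient vertex, and an optimization of the trade-off between the two numerator terms. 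All the steps check out, including the degenerate case where the maximal-entry vertex is itself deficient (then the numerator is at least $1$), and the denominator bound $\sum_{v}x_{v}^{2}\le n$ under the normalization $\max_{v}x_{v}=1$. The minimization of $(1-t)^{2}/(n-1)+t^{2}$ at $t=1/n$ giving $1/n$ is right, so you obtain $\Delta-\lambda_{1}(G)\ge 1/n^{2}$, which is strictly stronger than \eqref{eq18} once one observes $\Delta\ge 2$ for a connected irregular graph, hence $2n(n\Delta-1)\Delta^{2}>n^{2}$. What your approach buys is exactly this strengthening: Stevanovi\'{c}'s original argument propagates eigenvector-entry estimates through the eigen-equation and pays a factor of $\Delta^{2}$ (and an extra $\Delta$ in $n\Delta-1$), whereas the Dirichlet-energy decomposition removes the $\Delta$-dependence entirely and recovers the correct order $n^{-2}$, consistent with Cioab\u{a}'s bound $1/(nD)$ and with the conjectured asymptotics. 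The one point I would tighten in the write-up: state explicitly that the edges of the chosen $w$--$u_{0}$ path are distinct edges of $G$ (true for any path), so that $\sum_{uv\in E(G)}(x_{u}-x_{v})^{2}\ge\sum_{uv\in E(P)}(x_{u}-x_{v})^{2}$ is legitimate, and that the telescoping sum equals $x_{w}-x_{u_{0}}=1-x_{u_{0}}\ge 0$ before squaring.
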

Some lower bounds for $\Delta-\lambda_{1}(G)$ under other graph parameters,
such as the diameter and the minimum degree, were presented in \cite{Cioaba2007EJC,Cioaba2007JCTB,Liu2007,Shi2009,Zhang2005}.
In particular, Cioab\u{a} \cite{Cioaba2007EJC} proved the following lower bound, which solved a conjecture in \cite{Cioaba2007JCTB}.
\begin{theorem}{\rm (Cioab\u{a} \cite{Cioaba2007EJC})}
  Let $G$ be a connected irregular graph with $n$ vertices, maximum degree $\Delta$ and diameter $D$. Then
  \begin{equation}\label{eq22}
    \Delta-\lambda_{1}(G)>\frac{1}{nD}.
  \end{equation}
\end{theorem}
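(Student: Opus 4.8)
Let $x=(x_{1},\dots,x_{n})^{T}$ be the Perron eigenvector of $A(G)$, normalised so that $\max_{i}x_{i}=x_{u}=1$ for a vertex $u$. If $d_{G}(u)<\Delta$, then $\lambda_{1}=\lambda_{1}x_{u}=\sum_{j\sim u}x_{j}\le d_{G}(u)\le\Delta-1$, so $\Delta-\lambda_{1}\ge 1>\frac{1}{nD}$ and we are done; hence I will assume $d_{G}(u)=\Delta$. The starting point is the identity
\[
(\Delta-\lambda_{1})\sum_{i}x_{i}^{2}=\sum_{i}(\Delta-d_{i})x_{i}^{2}+\sum_{ij\in E(G)}(x_{i}-x_{j})^{2}=:N ,
\]
which follows from $x^{T}A(G)x=\lambda_{1}\sum_{i}x_{i}^{2}$ together with $\sum_{i}d_{i}x_{i}^{2}-x^{T}A(G)x=\sum_{ij\in E(G)}(x_{i}-x_{j})^{2}$. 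Since $G$ is irregular, $x$ is not constant, so $\sum_{i}x_{i}^{2}<n$; hence it suffices to establish the ``energy'' bound $N\ge\frac{1}{D}$, because then $\Delta-\lambda_{1}=N/\sum_{i}x_{i}^{2}>N/n\ge\frac{1}{nD}$, the strict inequality being automatic.

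To bound $N$ from below I would pick a vertex $w$ with $d_{G}(w)<\Delta$ at minimum distance from $u$, set $k:=d_{G}(u,w)$ (so $1\le k\le D$), and take a shortest path $u=u_{0},u_{1},\dots,u_{k}=w$. By minimality of $k$, each of $u_{0},\dots,u_{k-1}$ has degree $\Delta$. Keeping only the term $(\Delta-d_{w})x_{w}^{2}\ge x_{w}^{2}$ from the first sum in $N$ and only the $k$ edges of the path from the second sum, and then applying Cauchy--Schwarz to the telescoping differences $\sum_{i=1}^{k}(x_{u_{i-1}}-x_{u_{i}})=1-x_{w}$, we obtain
\[
N\ \ge\ x_{w}^{2}+\frac{(1-x_{w})^{2}}{k}\ \ge\ x_{w}^{2}+\frac{(1-x_{w})^{2}}{D}.
\]
A short one-variable computation shows $x_{w}^{2}+\frac{(1-x_{w})^{2}}{D}\ge\frac{1}{D}$ whenever $x_{w}\ge\frac{2}{D+1}$, and when $k\le D-1$ the same estimate already yields $N\ge x_{w}^{2}+\frac{(1-x_{w})^{2}}{D-1}\ge\frac{1}{D}$ for all $x_{w}$. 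Thus $N\ge\frac{1}{D}$ is immediate unless simultaneously $k=D$ and $x_{w}<\frac{2}{D+1}$.

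The remaining case is the heart of the matter: the nearest deficient vertex to $u$ lies at distance exactly the diameter and has a very small Perron weight. Then the ``equal drop'' configuration that makes the path estimate tight is incompatible with $A(G)x=\lambda_{1}x$, and the plan is to recover the deficit by using the eigenvalue equations $\lambda_{1}x_{u_{i}}=\sum_{j\sim u_{i}}x_{j}$ at the interior path vertices $u_{1},\dots,u_{D-1}$ --- each of full degree $\Delta$ --- which prevent the weights $x_{u_{i}}$ from decaying as fast as the extremal configuration requires; equivalently, the squared differences $(x_{u_{i}}-x_{j})^{2}$ coming from the off-path neighbours of the $u_{i}$ must contribute a definite positive amount to $N$. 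I expect quantifying this decay --- i.e.\ bounding how fast the Perron weight can drop along a shortest path all of whose early vertices have degree $\Delta$ --- to be the main obstacle; the rest is the routine bookkeeping above. Without it one still gets $N\ge\frac{1}{D+1}$ unconditionally, and hence the clean but slightly weaker estimate $\Delta-\lambda_{1}>\frac{1}{n(D+1)}$.
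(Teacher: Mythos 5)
This theorem is quoted by the paper from Cioab\u{a}'s article \cite{Cioaba2007EJC} and is not proved here, so there is no internal proof to compare against; your attempt has to stand on its own. The framework you set up is the standard one in this literature (and the same one the paper uses in Section 3): the identity $(\Delta-\lambda_{1})\sum_{i}x_{i}^{2}=\sum_{i}(\Delta-d_{i})x_{i}^{2}+\sum_{ij\in E(G)}(x_{i}-x_{j})^{2}$, the crude bound $\sum_{i}x_{i}^{2}<n$ under the normalisation $\max_i x_i=1$, and Cauchy--Schwarz along a shortest path from the maximal-entry vertex to a deficient vertex. Every step you actually carry out is correct: the reduction to $d_{G}(u)=\Delta$, the estimate $N\geq x_{w}^{2}+(1-x_{w})^{2}/k$, and the two elementary computations showing that $N\geq 1/D$ follows whenever $k\leq D-1$ or $x_{w}\geq 2/(D+1)$.

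However, the residual case --- $k=D$ together with $0<x_{w}<2/(D+1)$ --- is not a boundary nuisance to be mopped up later; it is the entire content of the theorem, and you do not close it. What you do complete gives only $N\geq 1/(D+1)$, hence $\Delta-\lambda_{1}>\frac{1}{n(D+1)}$, which is essentially the earlier Cioab\u{a}--Gregory--Nikiforov-type bound; the sole point of \cite{Cioaba2007EJC} was to remove that ``$+1$''. Your proposed repair (use the eigenvalue equations at the full-degree interior vertices $u_{1},\dots,u_{D-1}$ to show the Perron weights cannot realise the equal-drop profile, and harvest the off-path squared differences) is a sensible instinct, but it is stated as a hope rather than an argument: you exhibit no inequality that extracts a definite positive contribution from those off-path neighbours, and the obvious implementations --- e.g.\ sharpening $\sum_i x_i^2<n$ by accounting for the decay along the path, or switching to the linear identity $(\Delta-\lambda_{1})\sum_{i}x_{i}=\sum_{i}(\Delta-d_{i})x_{i}$, which only helps when $x_w\geq x_u/D$ --- still leave a window of values of $x_w$ uncovered. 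As written, the proposal proves a strictly weaker statement than the one claimed, so the gap is genuine and, in my judgement, it is exactly where the real difficulty of Cioab\u{a}'s theorem lies.
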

Recently, Chen and Hou obtained a lower bound for $\Delta-\lambda_{1}(G)$ in terms of the connectivity of $G$.
\begin{theorem}{\rm (Chen and Hou \cite{Chen2014})}\label{th2}
  Let $G$ be a $k$-connected irregular graph of order $n$ and size $m$. If the maximum degree of $G$ is $\Delta$, then
  \begin{equation}\label{eq19}
    \Delta-\lambda_{1}(G)>\frac{(n\Delta-2m)k^{2}}{(n\Delta-2m)\left(n^{2}-2n+2k\right)+nk^{2}}.
  \end{equation}
\end{theorem}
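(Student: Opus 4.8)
The plan is to read the bound off the Perron eigenvector, with $k$-connectivity entering through Menger's theorem. Let $x=(x_1,\dots,x_n)^{\top}$ be a positive eigenvector of $A(G)$ for $\lambda_1=\lambda_1(G)$, scaled so that $\max_i x_i=1$, attained at a vertex $u$, and write $d_i=d_G(v_i)$. A direct computation from $A(G)x=\lambda_1 x$ (using $x^{\top}A(G)x=\lambda_1\sum_i x_i^2=\sum_{v_iv_j\in E(G)}2x_ix_j$) gives the scale-invariant identity
\[
(\Delta-\lambda_1)\sum_{i=1}^{n}x_i^{2}=\sum_{i=1}^{n}(\Delta-d_i)\,x_i^{2}+\sum_{v_iv_j\in E(G)}(x_i-x_j)^{2}.
\]
Since $0<x_i\le1$ we have $\sum_i x_i^2\le n$, so it remains to bound the right-hand side $N$ from below. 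Both terms are nonnegative, and irregularity of $G$ forces the set $S=\{v_i:d_i<\Delta\}$ of deficient vertices to be nonempty, with $\sum_{v_i\in S}(\Delta-d_i)=n\Delta-2m$. If every vertex of $S$ has Perron value $1$ then $N\ge\sum_{v_i\in S}(\Delta-d_i)=n\Delta-2m$, which makes \eqref{eq19} immediate; so assume some $v\in S$ has $x_v<1$, and choose such a $v$ with $x_v$ as small as possible.

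The first sum is then at least $\bigl(\sum_{v_i\in S}(\Delta-d_i)\bigr)x_v^{2}=(n\Delta-2m)\,x_v^{2}$, since $x_i\ge x_v$ for every $v_i\in S$. For the second sum I would invoke $k$-connectivity: by Menger's theorem there are $k$ internally vertex-disjoint $u$--$v$ paths $P_1,\dots,P_k$, of lengths $r_1,\dots,r_k$; their edge sets are pairwise disjoint and their internal vertices are all distinct, so $r_1+\dots+r_k\le(n-2)+k$. Telescoping $1-x_v=x_u-x_v$ along $P_\ell$ and applying Cauchy--Schwarz gives $\sum_{e\in P_\ell}(x_i-x_j)^2\ge(1-x_v)^2/r_\ell$, and summing over $\ell$ together with $\sum_\ell r_\ell^{-1}\ge k^2/(r_1+\dots+r_k)$ yields $\sum_{v_iv_j\in E(G)}(x_i-x_j)^2\ge k^2(1-x_v)^2/(n+k-2)$. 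Hence
\[
N\ \ge\ (n\Delta-2m)\,x_v^{2}+\frac{k^{2}(1-x_v)^{2}}{n+k-2}.
\]
Minimizing the right-hand side over $x_v\in[0,1]$ (the function $\alpha t^2+\beta(1-t)^2$ attains minimum $\alpha\beta/(\alpha+\beta)$), then dividing by $\sum_i x_i^2\le n$ and clearing denominators, produces a lower bound for $\Delta-\lambda_1(G)$ of exactly the shape appearing in \eqref{eq19}; strictness follows because an irregular $G$ cannot turn all of the Cauchy--Schwarz steps into equalities at once.

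The delicate point is the bookkeeping of constants. Taken at face value, the two crude estimates ($\sum_i x_i^2\le n$ and $\sum_\ell r_\ell\le n+k-2$) give a bound of the right form but with $(n\Delta-2m)(n^2-2n+nk)+nk^2$ in the denominator rather than the sharper $(n\Delta-2m)(n^2-2n+2k)+nk^2$. Extracting the exact coefficient $2k$ requires exploiting the tension between these two steps — a path system whose total length $r_1+\dots+r_k$ is close to $n+k-2$ fills almost all of $V(G)$ with vertices whose Perron entries then cannot all be close to $1$, so that $\sum_i x_i^2$ is correspondingly below $n$ — and converting this trade-off into the precise inequality (or, alternatively, replacing the crude $\sum_\ell r_\ell$ bound by one that also uses that each $P_\ell$ has length at least the $u$--$v$ distance) is where the real work lies. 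Once that refinement is in place, the displayed identity, Menger's theorem, and the one-variable optimization assemble into \eqref{eq19}.
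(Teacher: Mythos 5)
Your proposal has the right skeleton (the quadratic-form identity, the deficiency sum, Menger plus Cauchy--Schwarz along $k$ disjoint paths, and the one-variable minimization, which is exactly Shi's inequality \eqref{eq4}), but it does not prove the stated bound, and you say so yourself. Carrying out your steps literally gives
\[
\Delta-\lambda_{1}(G)\;\ge\;\frac{(n\Delta-2m)k^{2}}{(n\Delta-2m)\left(n^{2}-2n+nk\right)+nk^{2}},
\]
whose denominator exceeds the target one by $(n\Delta-2m)k(n-2)>0$; the extra factor of $n$ on the $k$ comes precisely from the crude step $\sum_i x_i^2\le n\max_i x_i^2$. Announcing that ``the real work lies'' in repairing this, without doing it, leaves the theorem unproved: the entire content of the $2k$ (as opposed to $nk$) coefficient is the part you skip.

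The way this gap is actually closed --- both in Chen--Hou's argument and in this paper's proof of the stronger Theorem \ref{main1} --- is not the vague ``tension'' you describe but a concrete three-way case analysis that replaces $\sum_i x_i^2\le n\max_i x_i^2$ with a genuine lower bound on the largest entry. Working with a \emph{unit} eigenvector: either $x_{\hat v}^2$ is already at least the target value (then the deficiency term alone finishes), or the $k$ neighbours $w_1,\dots,w_k$ of $\hat v$ satisfy $\sum_i x_{w_i}^2$ large enough (then \eqref{eq4} applied to $\frac{n\Delta-2m}{k}x_{\hat v}^2+(x_{w_i}-x_{\hat v})^2$ finishes, as in \eqref{eq5}), or else $x_{\hat v}^2+\sum_i x_{w_i}^2$ is small, and since these are $k+1$ of the $n$ coordinates of a unit vector one gets $x_{\hat u}^2\ge\bigl(1-x_{\hat v}^2-\sum_i x_{w_i}^2\bigr)/(n-k-1)$, i.e.\ $x_{\hat u}^2$ is essentially $1/(n-k-1)$ rather than $1/n$; feeding this into the path estimate recovers the correct constant. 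If you want to complete your write-up, import this case analysis (your other steps survive unchanged); note also that the paper strengthens the path-length count to $\sum_i|V(P_i)|\le n+3k-\Delta-2$ by choosing the paths to meet $N(\hat u)$ in single vertices, which is what yields the further improvement of Theorem \ref{main1} over the Chen--Hou bound you are asked to prove.
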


Inspired by these results, we consider the spectral radius of $k$-connected irregular graphs.
The following result improves the bounds in Theorems \ref{th1} and \ref{th2}.

\begin{theorem}\label{main1}
  Let $G$ be a $k$-connected irregular graph of order $n$ and size $m$. If the maximum degree of $G$ is $\Delta$, then
  \begin{equation}\label{eq0}
    \Delta-\lambda_{1}(G)>\frac{(n\Delta-2m)k^{2}}{(n\Delta-2m)\left((n-1)^{2}-(n-k-1)(\Delta-k+1)\right)+nk^{2}}.
  \end{equation}
\end{theorem}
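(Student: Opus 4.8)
The plan is to work with a Perron eigenvector and the quadratic form of $\Delta I-A(G)$, refining the path argument that underlies Theorem~\ref{th2}. Let $\mathbf{x}$ be a positive eigenvector of $A(G)$ for $\lambda_{1}=\lambda_{1}(G)$; expanding $\mathbf{x}^{T}(\Delta I-A(G))\mathbf{x}$ vertex-by-vertex and edge-by-edge yields the identity
\[
(\Delta-\lambda_{1})\sum_{v\in V(G)}x_{v}^{2}\;=\;\sum_{v\in V(G)}\bigl(\Delta-d_{G}(v)\bigr)x_{v}^{2}\;+\;\sum_{uv\in E(G)}(x_{u}-x_{v})^{2}.
\]
Scale $\mathbf{x}$ so that $x_{p}=\max_{v}x_{v}=1$ for some vertex $p$. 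Since the right-hand side of \eqref{eq0} is less than $1$, we may assume $\Delta-\lambda_{1}<1$; then $\lambda_{1}=\lambda_{1}x_{p}=\sum_{u\in N_{G}(p)}x_{u}\le d_{G}(p)$ forces $d_{G}(p)=\Delta$, so $p$ is a vertex of maximum degree. The point of this reduction is that we now control the local structure at $p$.

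For the first sum, let $w$ be a vertex with $d_{G}(w)<\Delta$ minimizing $x_{w}$ (such a $w$ exists since $G$ is irregular); as $\sum_{v}(\Delta-d_{G}(v))=n\Delta-2m$, this sum is at least $(n\Delta-2m)x_{w}^{2}$. For the second sum I would use $k$-connectivity: by Menger's theorem there are $k$ internally disjoint $p$–$w$ paths $P_{1},\dots,P_{k}$, which I take so that $\sum_{j}\ell(P_{j})$ is minimum. Applying the Cauchy–Schwarz inequality along each path and then across the $k$ paths gives
\[
\sum_{uv\in E(G)}(x_{u}-x_{v})^{2}\;\ge\;\sum_{j=1}^{k}\frac{(1-x_{w})^{2}}{\ell(P_{j})}\;\ge\;\frac{k^{2}(1-x_{w})^{2}}{\sum_{j=1}^{k}\ell(P_{j})}.
\]
The improvement over Theorem~\ref{th2} now comes from two refined estimates that exploit $d_{G}(p)=\Delta$. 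First, minimality of the path system forces that no neighbour of $p$ except the $k$ that start the paths lies on any $P_{j}$ (otherwise a path could be shortened); hence the $\Delta-k$ other neighbours of $p$, together with $\{p,w\}$ and the $\sum_{j}\bigl(\ell(P_{j})-1\bigr)$ internal path vertices, are distinct vertices of $G$, which pushes $\sum_{j}\ell(P_{j})$ well below the value $n+k-2$ used before. Second, since $\sum_{u\in N_{G}(p)}x_{u}^{2}\le\sum_{u\in N_{G}(p)}x_{u}=\lambda_{1}$, the neighbourhood of $p$ contributes at most $\lambda_{1}$, not $\Delta$, to $\sum_{v}x_{v}^{2}$, so $\sum_{v}x_{v}^{2}\le n-\Delta+\lambda_{1}$; one can also absorb the internal path vertices into this count.

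Putting the pieces together, the identity gives $(\Delta-\lambda_{1})\sum_{v}x_{v}^{2}\ge (n\Delta-2m)x_{w}^{2}+k^{2}(1-x_{w})^{2}/\sum_{j}\ell(P_{j})$, and I would then minimize the right-hand side over $x_{w}\in[0,1]$ using the elementary identity $\min_{t}\{\alpha t^{2}+\beta(1-t)^{2}\}=\alpha\beta/(\alpha+\beta)$ with $\alpha=n\Delta-2m$ and $\beta=k^{2}/\sum_{j}\ell(P_{j})$; this is what manufactures the numerator $(n\Delta-2m)k^{2}$ and the summand $nk^{2}$ in the denominator of \eqref{eq0}. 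Strictness is automatic: $G$ is irregular, so some term $(\Delta-d_{G}(v))x_{v}^{2}$ is positive, and the two Cauchy–Schwarz steps cannot both be equalities. The step I expect to be the main obstacle is precisely the bookkeeping in the previous paragraph — turning the informal estimates on $\sum_{j}\ell(P_{j})$ and on $\sum_{v}x_{v}^{2}$ into the exact quadratic factor $(n-1)^{2}-(n-k-1)(\Delta-k+1)$, while handling the degenerate configurations that could erode it: when $w\in N_{G}(p)$ so some path has length $1$; when a path has length $2$ so its interior vertex is adjacent to both $p$ and $w$; when the $\Delta-k$ extra neighbours of $p$ overlap the extra neighbours of $w$; and when $d_{G}(w)$ is as small as $k$. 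Getting the constant exactly right, rather than a hair too large, is where the care is needed.
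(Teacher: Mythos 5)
Your skeleton---the quadratic-form identity, forcing $d_G(p)=\Delta$, Menger plus Cauchy--Schwarz along $k$ internally disjoint paths with $\sum_j\ell(P_j)\le n-\Delta+2k-2$, and the two-term minimization $\min_t\{\alpha t^2+\beta(1-t)^2\}=\alpha\beta/(\alpha+\beta)$---is essentially the paper's, but the way you handle the factor $\sum_v x_v^2$ leaves a gap that your proposed refinements cannot close. With the normalization $x_p=1$ your argument yields
\[
\Delta-\lambda_1\;\ge\;\frac{(n\Delta-2m)k^2}{\bigl[(n\Delta-2m)(n-\Delta+2k-2)+k^2\bigr]\sum_{v}x_v^2},
\]
and every bound you offer for $\sum_v x_v^2$ is essentially $n$: the refinement $\sum_v x_v^2\le n-\Delta+\lambda_1=n-(\Delta-\lambda_1)$ saves only the quantity you are trying to bound, which is negligible precisely in the regime of interest. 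Since the target coefficient satisfies the identity $(n-1)^2-(n-k-1)(\Delta-k+1)=(n-k-1)(n-\Delta+2k-2)+k^2$, the denominator your route produces exceeds the one in \eqref{eq0} by
\[
(n\Delta-2m)\bigl[(k+1)(n-\Delta)+k^2-2\bigr]\;>\;0
\]
for all $k\ge 1$ and $\Delta\le n-1$, so as written you prove a strictly weaker inequality, not the stated one.

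The missing idea is a three-way case split performed with a \emph{unit} eigenvector. Let $\hat v$ attain the minimum entry and pick $k$ neighbours $w_1,\dots,w_k$ of $\hat v$ (they exist because a $k$-connected graph has minimum degree at least $k$). If $x_{\hat v}^2$ is already at least $k^2$ over the claimed denominator, the degree-deficiency sum alone finishes; if $\sum_i x_{w_i}^2$ is large, the separate estimate $\Delta-\lambda_1\ge\frac{n\Delta-2m}{n\Delta-2m+k}\sum_i x_{w_i}^2$, obtained by applying the same $\alpha t^2+\beta(1-t)^2$ inequality to the $k$ edges $\hat v w_i$, finishes; otherwise all of $x_{\hat v}^2,x_{w_1}^2,\dots,x_{w_k}^2$ are small and, since $\|x\|=1$, the maximum entry obeys $x_{\hat u}^2\ge\bigl(1-x_{\hat v}^2-\sum_i x_{w_i}^2\bigr)/(n-k-1)$. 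That division by $n-k-1$ rather than $n$ is exactly what converts $n(n-\Delta+2k-2)$ into $(n-k-1)(n-\Delta+2k-2)+k^2=(n-1)^2-(n-k-1)(\Delta-k+1)$; the final algebra closes with equality, so there is no slack left to recover by sharpening $\sum_j\ell(P_j)$ or $\sum_v x_v^2$ instead. Without this case analysis (or an equivalent device isolating the entries at $\hat v$ and its neighbours from the mass of the eigenvector) the constant in \eqref{eq0} is out of reach.
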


As mentioned above, the spectral radius of a $\Delta$-regular graph is equal to $\Delta$.
It is clear that the spectral radius of a proper subgraph of a connected $\Delta$-regular graph is smaller than $\Delta$.
Let $H$ be a proper subgraph of a connected $\Delta$-regular graph with the spectral radius $\lambda_{1}(H)$.
Cioab\v{a} \cite{Cioaba2007EJC} presented a lower bound for $\Delta-\lambda_{1}(H)$ when $H$ is obtained from the $\Delta$-regular supergraph by deleting an edge.
Later, Nikiforov \cite{Nikiforov2007} proved a similar bound for $\Delta-\lambda_{1}(H)$ if $H$ is an arbitrary proper subgraph of the $\Delta$-regular supergraph.
In \cite{Shi2009}, Shi improved the bounds of Cioab\v{a} and Nikiforov.
Recently, Chen and Hou \cite{Chen2014} studied the problem under the condition of connectivity.
Let $H$ be a proper subgraph of a connected $\Delta$-regular $k$-connected graph.
The following lower bound for $\Delta-\lambda_{1}(H)$ was obtained by Chen and Hou.

\begin{theorem}{\rm (Chen and Hou \cite{Chen2014})}\label{th3}
  Let $H$ be a proper subgraph of a $k$-connected $\Delta$-regular graph $G$ with $n$ vertices. If $k\geq 2$, then
  \begin{equation}\label{eq21}
    \Delta-\lambda_{1}(H)>\frac{(k-1)^{2}}{(n-\Delta)(n-\Delta+2k-4)+n(k-1)^{2}}.
  \end{equation}
\end{theorem}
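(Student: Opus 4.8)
The plan is to run the Rayleigh-quotient argument underlying Theorems~\ref{th1}--\ref{th2}, now feeding in the $k$-connectivity of $G$ through Menger's theorem. First I would reduce to the case $V(H)=V(G)$: adjoining to $H$ the vertices of $V(G)\setminus V(H)$ as isolated vertices changes neither $\lambda_{1}(H)$ nor the right-hand side, so we may assume $H$ is a spanning subgraph of $G$ with $E(G)\setminus E(H)\ne\emptyset$. Write $d(v)=d_{H}(v)\le\Delta$ and $\lambda=\lambda_{1}(H)$, and let $\mathbf x\ge 0$ be a unit eigenvector of $A(H)$ for $\lambda$ whose support is a single connected component $C$ of $H$ with $\lambda_{1}(H[C])=\lambda$. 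Since $\|\mathbf x\|_{2}=1$ and $A(H)\mathbf x=\lambda\mathbf x$, evaluating the quadratic form of $\Delta I-A(H)$ at $\mathbf x$ gives
\begin{equation}\label{plan-id}
\Delta-\lambda=\underbrace{\sum_{v\in C}\bigl(\Delta-d(v)\bigr)x_{v}^{2}}_{T_{\mathrm{def}}}\;+\;\underbrace{\sum_{uv\in E(H[C])}(x_{u}-x_{v})^{2}}_{T_{\mathrm{Dir}}},
\end{equation}
both summands being nonnegative. Fix $v^{\ast}\in C$ incident with an edge of $E(G)\setminus E(H)$ — one exists since $C$ either carries such an edge (case $C=V(G)$) or, being a component of $H$ with $C\ne V(G)$, has at least $k$ edges of $G$ leaving it, all of them outside $H$ (as $G$ is $k$-edge-connected). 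Then $d(v^{\ast})\le\Delta-1$, so $T_{\mathrm{def}}\ge x_{v^{\ast}}^{2}$.

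The heart of the matter is a two-sided estimate of $\sigma:=\sum_{v\in C}(x_{v}-x_{v^{\ast}})^{2}$. For the lower bound, $\sigma=1-2x_{v^{\ast}}\sum_{v\in C}x_{v}+|C|\,x_{v^{\ast}}^{2}\ge\bigl(1-\sqrt{|C|}\,x_{v^{\ast}}\bigr)^{2}$, using $\sum_{v\in C}x_{v}\le\sqrt{|C|}$ (Cauchy--Schwarz). For the upper bound, observe that every vertex of a connected $\Delta$-regular graph has eccentricity at most $n-\Delta$ in $G$, since the ball of radius one about it already contains $\Delta+1$ vertices and the balls grow strictly until they exhaust $V(G)$; by Menger's theorem $G$ contains, for each $v\in C$, a system of $k-1$ internally disjoint $v^{\ast}$-$v$ paths (Menger supplies $k$ between any two vertices; one may be set aside, e.g.\ to avoid the non-$H$ edge at $v^{\ast}$, which is why $k\ge2$ matters). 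Squaring the telescoping sums $x_{v}-x_{v^{\ast}}=\sum(\cdots)$ along these paths, applying Cauchy--Schwarz, summing over $v\in C$, and charging the few edges of the combined path system that lie outside $E(H)$ to $T_{\mathrm{def}}$ via $(x_{u}-x_{u'})^{2}\le 2x_{u}^{2}+2x_{u'}^{2}$, one should arrive at
\[
\sigma\ \le\ W\,T_{\mathrm{Dir}}\qquad\text{with}\qquad W\ \le\ \frac{(n-\Delta)(n-\Delta+2k-4)}{(k-1)^{2}},
\]
where $W$ measures the largest total path-weight borne by a single edge: the eccentricity bound keeps the path lengths of order $n-\Delta$, and using $k-1$ internally disjoint routings instead of a single geodesic divides the controlling quantity by roughly $(k-1)^{2}$.

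Combining the two estimates, $T_{\mathrm{Dir}}\ge\sigma/W\ge\bigl(1-\sqrt{|C|}\,x_{v^{\ast}}\bigr)^{2}/W$, so \eqref{plan-id} gives
\[
\Delta-\lambda\ =\ T_{\mathrm{def}}+T_{\mathrm{Dir}}\ \ge\ x_{v^{\ast}}^{2}+\frac{\bigl(1-\sqrt{|C|}\,x_{v^{\ast}}\bigr)^{2}}{W}\ \ge\ \frac{1}{W+|C|}\ \ge\ \frac{(k-1)^{2}}{(n-\Delta)(n-\Delta+2k-4)+n(k-1)^{2}},
\]
using $\min_{t\ge 0}\bigl(t^{2}+(1-\sqrt{|C|}\,t)^{2}/W\bigr)=\tfrac{1}{W+|C|}$ and $|C|\le n$; the inequality is strict because one of the Cauchy--Schwarz steps is. The small list of degenerate configurations ($\Delta=n-1$, or $H$ having a component on at most $\Delta$ vertices, where $\Delta-\lambda\ge1$ outright) is handled separately. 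I expect the middle step to be the real obstacle: securing the \emph{sharp} estimate $W\le\frac{(n-\Delta)(n-\Delta+2k-4)}{(k-1)^{2}}$ — in particular the exponent $(k-1)^{2}$ (a crude union bound over the $k-1$ paths only saves a factor $k-1$) and the lower-order term ``$+2k-4$'' — calls for a carefully engineered choice of the $k-1$ disjoint routings so that the per-edge weights add up exactly, together with a loss-free handling of the charging of the non-$H$ edges, so that no $T_{\mathrm{def}}$ term survives on the right-hand side of the $\sigma$-estimate.
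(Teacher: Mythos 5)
There is a genuine gap, and you have in fact flagged it yourself: the entire proof rests on the unproven estimate $\sigma\le W\,T_{\mathrm{Dir}}$ with $W\le\frac{(n-\Delta)(n-\Delta+2k-4)}{(k-1)^{2}}$, and this is not a routine technicality but the place where your route breaks down. Because you sum the Poincar\'e-type inequality over \emph{all} vertices $v\in C$, you are implicitly solving a multi-commodity routing problem: every vertex needs its own system of $k-1$ internally disjoint $v^{\ast}$--$v$ paths, and after Cauchy--Schwarz the coefficient of $(x_{u}-x_{u'})^{2}$ for a fixed edge $uu'$ is the total path-length weight of \emph{all} the systems that use that edge. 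The systems of different vertices overlap massively near $v^{\ast}$: since $v^{\ast}$ has at most $\Delta$ incident edges, the $(|C|-1)(k-1)$ initial path-edges must be distributed over at most $\Delta$ edges, so some edge at $v^{\ast}$ is used by at least $(|C|-1)(k-1)/\Delta$ different systems. For the claimed $W$ one would need per-edge usage of order $n-\Delta$, and already for $G=K_{4,4}$ (where $n=8$, $\Delta=k=4$) the forced congestion $\lceil 21/4\rceil=6$ exceeds $n-\Delta=4$. A crude union bound over the $k-1$ paths, as you note, only buys a factor $k-1$, and no ``carefully engineered routing'' can evade this counting obstruction. Two secondary problems compound this: (i) you work with an arbitrary proper subgraph $H$, so the path systems may traverse many edges of $E(G)\setminus E(H)$, and charging them to $T_{\mathrm{def}}$ via $(x_{u}-x_{u'})^{2}\le 2x_{u}^{2}+2x_{u'}^{2}$ costs factors of $2$ that destroy the sharp constant; (ii) $H$ need not be $(k-1)$-connected at all, so the disjoint paths may simply not exist inside $H$.

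Both secondary problems disappear, and the congestion problem is avoided entirely, if you first use monotonicity of the spectral radius to reduce to $H=G-e$ for a single edge $e$ (any proper subgraph of $G$ sits inside some $G-e$, and $\lambda_{1}$ only increases when passing to the supergraph). This is what the paper does: it proves the stronger Theorem~\ref{main2} by establishing Lemma~\ref{lem1} for $H=G-e$, where the Dirichlet term is bounded below using $k$ internally disjoint paths between just \emph{two} distinguished vertices (the maximum eigenvector entry $x_{\hat{w}}$ and the deficient vertex $\hat{u}$), so each edge carries at most one path and the Cauchy--Schwarz constant is exactly $\sum_{i}(|V(P_{i})|-1)\le n-\Delta+2k-2$; the conclusion then follows from two applications of the elementary inequality (\ref{eq4}) rather than from a global Poincar\'e inequality. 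Theorem~\ref{th3} itself is then a consequence of the comparison $\Phi_{2}>\Phi_{1}$ in the remark after Theorem~\ref{main2}. If you want to salvage your outline, replace the sum over all $v\in C$ by the single comparison between the maximum entry and a deficient vertex, and perform the reduction to $G-e$ first.
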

In this paper we delete the requirement $k\geq2$ in Theorem \ref{th3}, and present an improved lower bound for $\Delta-\lambda_{1}(H).$
\begin{theorem}\label{main2}
  Let $G$ be a $k$-connected $\Delta$-regular graph of order $n$. If $H$ is a proper subgraph of $G$, then
  \begin{equation}\label{eq20}
    \Delta-\lambda_{1}(H)>\frac{k^{2}}{(n-\Delta-1)(n-\Delta+2k-2)+nk^{2}}.
  \end{equation}
\end{theorem}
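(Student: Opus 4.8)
\emph{Overview.} The plan is to reduce to the case of a single deleted edge and then estimate the resulting gap through a decomposition of the Rayleigh quotient combined with Menger's theorem.

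\emph{Reduction to $H=G-e$.} Since the spectral radius is monotone under subgraphs, every proper subgraph $H$ of $G$ is contained in $G-e$ for a suitable edge $e=xy$ of $G$: if $V(H)=V(G)$, take an edge of $G$ absent from $H$; if $V(H)\subsetneq V(G)$, then $H\subseteq G-v$ for some deleted vertex $v$, and $G-v\subseteq G-e$ whenever $e$ is incident with $v$. Hence $\Delta-\lambda_1(H)\ge\Delta-\lambda_1(G-e)$, and it suffices to prove the inequality for $H=G-e$. Next, let $z$ be a unit Perron eigenvector of $H$ (passing to the component carrying the larger spectral radius if $H$ is disconnected, which forces $k=1$), and let $u$ satisfy $z_u=\max_v z_v=:M$, so $M^{2}\ge 1/n$. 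From the eigenvalue equation for $H$ and $\lambda_1(G)=\Delta$ one gets the identity
\[
\Delta-\lambda_1(H)=\sum_{v}\bigl(\Delta-d_H(v)\bigr)z_v^{2}+\sum_{ij\in E(H)}(z_i-z_j)^{2}=z_x^{2}+z_y^{2}+\sum_{ij\in E(H)}(z_i-z_j)^{2},
\]
since in $G-e$ only $x$ and $y$ are deficient, each by exactly $1$. If $u\in\{x,y\}$ then $\Delta-\lambda_1(H)\ge M^{2}\ge 1/n$, which already exceeds the claimed bound because $(n-\Delta-1)(n-\Delta+2k-2)\ge 0$; so assume $u\notin\{x,y\}$.

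\emph{Paths and length control.} By Menger's theorem $G$ contains $k$ internally disjoint $u$--$x$ paths; at most one of them, say $P$, uses $e$, necessarily as its last edge $yx$, and we replace $P$ by its initial segment from $u$ to $y$. Repeatedly shortcutting each path so that its only internal vertex adjacent to the endpoint under consideration is the penultimate one (doing this on the $x$-side for $P_1,\dots,P_{k-1}$, on the $y$-side for the modified path $P_k'$, and on the $u$-side throughout), we obtain pairwise edge-disjoint paths $P_1,\dots,P_{k-1}$ from $u$ to $x$ and $P_k'$ from $u$ to $y$, all lying inside $H$. Their internal vertices are disjoint and avoid $\{u,x,y\}$, and at least $\Delta-k-1$ of the $\Delta$ neighbours of $x$ lie on none of these paths (only the at most $k$ penultimate/endpoint vertices near $x$, and possibly $u$ itself, can be neighbours of $x$ occurring on the paths). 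This yields the key length bound $\sum_{i=1}^{k-1}|P_i|+|P_k'|\le n-\Delta+2k-2$; the regular case $\Delta=k$ and the complete case $n=\Delta+1$ are treated directly.

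\emph{Recombination, optimisation, and the main obstacle.} Telescoping $z$ along each path and applying the Cauchy--Schwarz inequality $\sum a_i^{2}/b_i\ge(\sum a_i)^{2}/\sum b_i$ gives
\[
\sum_{ij\in E(H)}(z_i-z_j)^{2}\ \ge\ \frac{(k-1)^{2}(M-z_x)^{2}}{L'}+\frac{(M-z_y)^{2}}{\ell},\qquad L'=\sum_{i<k}|P_i|,\ \ell=|P_k'|.
\]
Minimising the resulting lower bound for $\Delta-\lambda_1(H)$ over $z_x,z_y\in[0,M]$ (two one-variable quadratics, each attaining its minimum in the interior) and then invoking $\tfrac{(k-1)^{2}}{A}+\tfrac{1}{B}\ge\tfrac{k^{2}}{A+B}$ produces $\Delta-\lambda_1(H)\ge M^{2}k^{2}\big/\bigl(L'+\ell+(k-1)^{2}+1\bigr)$, into which one substitutes the length bound and $M^{2}\ge 1/n$; strictness follows since $H$ is not $\Delta$-regular. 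The main obstacle is carrying out this final optimisation and the associated counting tightly enough — in particular sharpening the crude estimates $M^{2}\ge 1/n$ and $\sum_i|P_i|\le n-\Delta+2k-2$ precisely where they are lossy — so that the denominator comes out as exactly $(n-\Delta-1)(n-\Delta+2k-2)+nk^{2}$ rather than a slightly larger expression; the bookkeeping for short paths, for the possibility that $u$ is adjacent to $x$, and for the degenerate cases also requires care.
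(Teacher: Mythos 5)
Your overall architecture matches the paper's: reduce to $H=G-e$ by monotonicity, use the identity $\Delta-\lambda_1(H)=z_x^2+z_y^2+\sum_{ij\in E(H)}(z_i-z_j)^2$, route Menger paths from the maximizing vertex to the deficient vertices (truncating the one path that uses the deleted edge), and recombine via Cauchy--Schwarz and the quadratic inequality $a(p-q)^2+bq^2\ge abp^2/(a+b)$. However, there is a genuine gap at the final quantitative step, and you correctly identify it yourself: with only $M^2\ge 1/n$ your chain yields
\[
\Delta-\lambda_1(H)\ \ge\ \frac{M^2k^2}{L'+\ell+(k-1)^2+1}\ \ge\ \frac{k^2}{n\,(n-\Delta+k^2)},
\]
and the denominator $n(n-\Delta)+nk^2$ is \emph{not} bounded above by the target denominator $(n-\Delta-1)(n-\Delta+2k-2)+nk^2$: writing $t=n-\Delta\ge1$, you would need $nt\le(t-1)(t+2k-2)$, which fails already at $t=1$ (the right side is $0$) and more generally whenever $\Delta$ is large. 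So the argument as written proves a strictly weaker bound, and no amount of care in the path bookkeeping fixes this, because the loss is in the estimate $M^2\ge1/n$.

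The missing idea is the paper's trichotomy, which replaces $M^2\ge1/n$ by $M^2\gtrsim 1/(n-\Delta-1)$. Concretely: one first observes that if $z_x^2+z_y^2$ exceeds the target value the identity finishes immediately, and that the same identity restricted to the edges at $x$ gives the auxiliary bound $\Delta-\lambda_1(H)\ge\frac1\Delta\sum_{i=1}^{\Delta-1}z_{u_i}^2$ over the neighbours $u_1,\dots,u_{\Delta-1}$ of $x$ in $H$ (again via the quadratic inequality), which finishes if those neighbours carry enough mass. In the remaining case the $\Delta+1$ vertices $\{x,y\}\cup N_H(x)$ carry only an explicitly small fraction of the unit mass $\sum_v z_v^2=1$, so the maximum entry over the other $n-\Delta-1$ vertices satisfies $M^2\ge\bigl(1-\text{small}\bigr)/(n-\Delta-1)$. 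This is precisely where the factor $n-\Delta-1$ in the stated denominator comes from; one also needs to check (as the paper does via $\lambda_1(H)>\Delta-2/n$) that the maximizer is not $x$ or $y$, and to treat $\Delta=n-1$ separately since there the set of "other" vertices is empty (your claim that $1/n$ "exceeds" the bound is only an equality in that case). Your two-endpoint recombination $\tfrac{(k-1)^2}{A}+\tfrac1B\ge\tfrac{k^2}{A+B}$ is actually slightly sharper than the paper's single-endpoint version, but that refinement cannot compensate for the factor $n$ versus $n-\Delta-1$.
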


The rest of the paper is organized as follows. In the next section, we present the proof of Theorem \ref{main3}.
The proofs of Theorems \ref{main1} and \ref{main2} are given in Section 3. Finally, some discussions are presented.

\section{Spectral radius of subcubic bipartite graphs}

Eigenvector is an effective tool to study the spectral radius.
Let $G$ be a graph with spectral radius $\lambda_{1}(G)$. By the Rayleigh quotient,
$$\lambda_{1}(G)=\max\frac{y^{t}A(G)y}{y^{t}y}=\max_{||y||=1}y^{t}A(G)y.$$
Note that $A(G)$ is a nonnegative matrix. Then, by Perron-Frobenius theorem (see, e.g., \cite[Theorem 4.2]{Minc1988}),
there exists a nonnegative eigenvector corresponding to $\lambda_{1}(G)$. Furthermore, if $G$ is connected, then $A(G)$ is irreducible,
and there is a positive eigenvector corresponding to $\lambda_{1}(G).$
Suppose that $x$ is a unit eigenvector of $A(G)$ corresponding to $\lambda_{1}(G)$, and such a vector is called the principal eigenvector of $G$.
The vector $x$ can be considered as a function on the vertices of $G$, where $x_{v}$ means the entry of $x$ corresponding to $v\in V(G)$. Thus,
$$\lambda_{1}(G)=x^{t}A(G)x=\sum_{uv\in E(G)}2x_{u}x_{v}.$$
Let us recall some useful results. The following result presents an operation,
which increasing the spectral radius without changing the degree sequence.

\begin{lemma}{\rm (\cite{Cvetkovic1997})}\label{trans}
Let $u,u',v,v'$ be four distinct vertices of a connected graph $G$ and let $uv',u'v\in E(G)$, while $uv,u'v'\notin E(G)$.
Let $x$ be the principal eigenvector of $G.$ If $x_{u}\geq x_{u'}$ and $x_{v}\geq x_{v'}$, then
  $$\lambda_{1}(G+uv+u'v'-uv'-u'v)\geq \lambda_{1}(G),$$
  with equality if and only if $x_{u}= x_{u'}$ and $x_{v}=x_{v'}$.
\end{lemma}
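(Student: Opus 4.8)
The plan is to compare $\lambda_{1}(G')$ with $\lambda_{1}(G)$, where $G'=G+uv+u'v'-uv'-u'v$, by feeding the quadratic form of $A(G')$ the \emph{same} unit vector $x$ — the principal eigenvector of $G$. Since $u,u',v,v'$ are distinct, the four unordered pairs $\{u,v\},\{u',v'\},\{u,v'\},\{u',v\}$ are distinct, so $A(G')$ and $A(G)$ differ in exactly the eight entries they index, and a direct computation gives
\[
x^{t}A(G')x-x^{t}A(G)x=2x_{u}x_{v}+2x_{u'}x_{v'}-2x_{u}x_{v'}-2x_{u'}x_{v}=2(x_{u}-x_{u'})(x_{v}-x_{v'}),
\]
which is nonnegative under the hypotheses $x_{u}\geq x_{u'}$ and $x_{v}\geq x_{v'}$. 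By the Rayleigh quotient (with $\|x\|=1$), $\lambda_{1}(G')\geq x^{t}A(G')x\geq x^{t}A(G)x=\lambda_{1}(G)$, which is the desired inequality.

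For the equality case, first suppose $\lambda_{1}(G')=\lambda_{1}(G)$. Then both inequalities above are equalities; in particular $x^{t}A(G')x=\lambda_{1}(G')\,x^{t}x$, and since $A(G')$ is symmetric with largest eigenvalue $\lambda_{1}(G')$, the equality characterization of Rayleigh's principle forces $x$ to be an eigenvector of $A(G')$ for $\lambda_{1}(G')$. Combined with $A(G)x=\lambda_{1}(G)x$ and $\lambda_{1}(G')=\lambda_{1}(G)$, this yields $(A(G')-A(G))x=0$. Reading off the coordinates indexed by $u,u',v,v'$ of this vector identity gives $x_{v}-x_{v'}=0$, $x_{v'}-x_{v}=0$, $x_{u}-x_{u'}=0$, $x_{u'}-x_{u}=0$, that is, $x_{u}=x_{u'}$ and $x_{v}=x_{v'}$.

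Conversely, assume $x_{u}=x_{u'}$ and $x_{v}=x_{v'}$. The same coordinate computation shows $(A(G')-A(G))x=0$, so $x$ is an eigenvector of $A(G')$ with eigenvalue $\lambda_{1}(G)$, and $x>0$ because $G$ is connected. Pairing this with a nonnegative left Perron eigenvector $y$ of $A(G')$ (so $A(G')^{t}y=\lambda_{1}(G')\,y$, $y\geq 0$, $y\neq 0$) gives $\lambda_{1}(G')\,(y^{t}x)=y^{t}A(G')x=\lambda_{1}(G)\,(y^{t}x)$ with $y^{t}x>0$, hence $\lambda_{1}(G')=\lambda_{1}(G)$, completing the equivalence. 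The one point that needs care is precisely this last step: $G'$ may be disconnected, so $A(G')$ need not be irreducible and one cannot invoke the strong form of Perron–Frobenius for $G'$ directly; the argument only uses the general fact that a nonnegative matrix possessing a strictly positive eigenvector must have that eigenvalue equal to its spectral radius. Everything else is routine Rayleigh-quotient bookkeeping, so this is the main (and only mild) obstacle.
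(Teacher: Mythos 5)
Your proof is correct. The paper states this lemma as a known result quoted from the Cvetkovi\'{c}--Rowlinson--Simi\'{c} reference and supplies no proof of its own, so there is nothing to compare against; your argument --- the quadratic-form comparison $x^{t}A(G')x-x^{t}A(G)x=2(x_{u}-x_{u'})(x_{v}-x_{v'})$, the Rayleigh equality characterization to get $(A(G')-A(G))x=0$, and the pairing with a nonnegative eigenvector of $A(G')$ to handle the fact that $G'$ may be disconnected --- is the standard and complete one.
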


Two edges are independent if they have no end-vertex in common.
For a connected graph $G$, let $uv'$ and $u'v$ be two independent edges in $G$.
We say that $\{uv', u'v\}$ is a bad pair of edges in $G$ if it satisfies the following conditions:
\begin{itemize}
  \item $u\not\sim v$ and $u'\not\sim v'$;
  \item $x_{u}\geq x_{u'}$ and $x_{v}>x_{v'}$;
  \item $G+\{uv, u'v'\}-\{uv',u'v\}$ is connected.
\end{itemize}
According to Lemma \ref{trans}, one can see that the spectral radius of $G+\{uv, u'v'\}-\{uv',u'v\}$ is greater than that of $G$.
Thus, we obtain the following consequence directly.

\begin{corollary}\label{nobadedge}
  Let $\mathcal{F}$ be the set of all connected graphs with a given degree sequence.
  If $G$ is a maximal graph in $\mathcal{F}$, then there is no bad pair of edges in $G$.
\end{corollary}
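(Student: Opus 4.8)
The plan is a short argument by contradiction that simply packages Lemma \ref{trans}. Suppose $G$ is a maximal graph in $\mathcal{F}$ that nonetheless contains a bad pair of edges $\{uv',u'v\}$, let $x$ be the principal eigenvector of $G$, and set $G'=G+\{uv,u'v'\}-\{uv',u'v\}$. The goal is to show $\lambda_{1}(G')>\lambda_{1}(G)$ with $G'\in\mathcal{F}$, contradicting maximality.

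First I would check that $G'\in\mathcal{F}$. The first defining condition of a bad pair, $u\not\sim v$ and $u'\not\sim v'$ in $G$, guarantees that the two added edges $uv$ and $u'v'$ are not already present, so the switch is well defined; the third condition states exactly that $G'$ is connected. Moreover the switch is degree-preserving: each of $u,u',v,v'$ loses exactly one incident edge and gains exactly one, while the degree of every other vertex is untouched, so $G'$ has the same degree sequence as $G$. Hence $G'$ lies in $\mathcal{F}$.

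Next I would invoke Lemma \ref{trans} with the vertices $u,u',v,v'$ in the roles named there: the hypotheses $uv',u'v\in E(G)$ and $uv,u'v'\notin E(G)$ hold, and the bad-pair condition supplies $x_{u}\ge x_{u'}$ and (since $x_{v}>x_{v'}$) also $x_{v}\ge x_{v'}$. Thus $\lambda_{1}(G')\ge\lambda_{1}(G)$. The equality case of Lemma \ref{trans} requires $x_{u}=x_{u'}$ and $x_{v}=x_{v'}$, but $x_{v}>x_{v'}$ rules that out, so in fact $\lambda_{1}(G')>\lambda_{1}(G)$. This contradicts the assumption that $G$ is maximal in $\mathcal{F}$, and the corollary follows.

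There is no genuine obstacle here; the only point deserving a moment's care is the verification that the switch both preserves the degree sequence and keeps the graph connected, and this is precisely what the three conditions in the definition of a bad pair are designed to secure.
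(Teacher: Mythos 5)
Your argument is correct and is exactly the paper's reasoning: the paper remarks immediately after defining a bad pair that Lemma \ref{trans} (with the strict inequality $x_{v}>x_{v'}$ ruling out the equality case) gives $\lambda_{1}(G+\{uv,u'v'\}-\{uv',u'v\})>\lambda_{1}(G)$, and the third bad-pair condition plus degree preservation keeps the new graph in $\mathcal{F}$, contradicting maximality. Nothing is missing.
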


The next lemma gives another operation on the neighbours of two distinct vertices, which also enables the increase of the spectral radius.

\begin{lemma}{\rm (\cite{Cvetkovic1997})}\label{lem-cve}
  Let $u,v$ be two vertices of a connected graph $G$ and let $S\subseteq N(u)\backslash N(v)$.
Let $x$ be the principal eigenvector of $G.$ Define
 $$G'=G-\{wu:w\in S\}+\{wv:w\in S\}.$$
If $S\neq \emptyset$ and $x_v\geq x_u$, then $\lambda_{1}(G')>\lambda_{1}(G)$.
\end{lemma}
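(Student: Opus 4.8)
The plan is to use the Rayleigh quotient with the principal eigenvector $x$ of $G$ as a test vector for $G'$. First I would note that the transformation only deletes the edges $\{wu:w\in S\}$ and adds the edges $\{wv:w\in S\}$, and that for every $w\in S$ we have $w\neq u$ (since $w\in N(u)$ in a simple graph), $w\neq v$, and $wv\notin E(G)$ (since $w\notin N(v)$); hence $G'$ is a genuine simple graph obtained by honest edge moves, and $N_{G'}(u)=N_G(u)\setminus S$. Writing $\lambda=\lambda_{1}(G)$ and recalling $\lambda=x^{t}A(G)x=2\sum_{ab\in E(G)}x_{a}x_{b}$, an edge-by-edge comparison records the exact change in the quadratic form:
\[
x^{t}A(G')x=x^{t}A(G)x-2\sum_{w\in S}x_{w}x_{u}+2\sum_{w\in S}x_{w}x_{v}=\lambda+2\Bigl(\sum_{w\in S}x_{w}\Bigr)(x_{v}-x_{u}).
\]
Since $\|x\|=1$, the Rayleigh principle yields $\lambda_{1}(G')\geq x^{t}A(G')x$. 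Moreover, because $G$ is connected, the Perron--Frobenius theorem (cited above) guarantees $x_{w}>0$ for every vertex $w$, so $\sum_{w\in S}x_{w}>0$ as $S\neq\emptyset$.

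Next I would split into two cases according to the sign of $x_{v}-x_{u}$. If $x_{v}>x_{u}$, the displayed difference is strictly positive, whence $\lambda_{1}(G')\geq x^{t}A(G')x>\lambda=\lambda_{1}(G)$ and the conclusion follows at once. The delicate case is the boundary $x_{v}=x_{u}$, where the test-vector computation collapses to $x^{t}A(G')x=\lambda$, giving only the weak inequality $\lambda_{1}(G')\geq\lambda$. Upgrading this to a strict inequality is the main obstacle, and I would handle it by contradiction.

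Assume $x_{v}=x_{u}$ and $\lambda_{1}(G')=\lambda$. Then $x$ is a unit vector attaining the maximum of the Rayleigh quotient for the symmetric matrix $A(G')$, so $x$ must be an eigenvector of $A(G')$ for $\lambda_{1}(G')=\lambda$, i.e.\ $A(G')x=\lambda x$. Comparing the $u$-th coordinates of $A(G)x=\lambda x$ and $A(G')x=\lambda x$, and using $N_{G'}(u)=N_{G}(u)\setminus S$, gives
\[
\lambda x_{u}=\sum_{w\in N_{G}(u)}x_{w}\qquad\text{and}\qquad \lambda x_{u}=\sum_{w\in N_{G}(u)}x_{w}-\sum_{w\in S}x_{w},
\]
so that $\sum_{w\in S}x_{w}=0$, contradicting $x_{w}>0$ and $S\neq\emptyset$. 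Hence $\lambda_{1}(G')>\lambda_{1}(G)$ in this case as well. It is worth emphasizing that this argument nowhere requires $G'$ to be connected: the inequality $\lambda_{1}(G')\geq x^{t}A(G')x$ holds regardless, and the coordinate comparison at $u$ is precisely what forces the strict spectral gap at the boundary $x_{v}=x_{u}$.
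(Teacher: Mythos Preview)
Your proof is correct. The paper does not supply its own proof of this lemma; it is quoted from \cite{Cvetkovic1997} and used as a black box, so there is no in-paper argument to compare against. Your Rayleigh-quotient computation together with the Perron--Frobenius positivity of $x$ handles the case $x_v>x_u$ cleanly, and your treatment of the boundary case $x_v=x_u$ via the eigenvector equation at the vertex $u$ is exactly the right way to force strictness. One small remark: the claim ``$w\neq v$ for all $w\in S$'' is not literally forced by $S\subseteq N(u)\setminus N(v)$ alone (if $u\sim v$ one could in principle have $v\in S$); it is really part of the implicit hypothesis that $G'$ is a simple graph, and in every application in the paper $v\notin S$ holds, so this is a matter of reading the statement rather than a gap in your reasoning.
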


Let $\mathcal{B}(n, \Delta)$ be the set of all connected irregular bipartite graphs on $n$ vertices with maximum degree $\Delta$.
Clearly,  $\mathcal{B}(n, 3)$ means the set of all connected subcubic bipartite graphs on $n$ vertices.
Lemma \ref{lem-cve} implies the following property of the principal eigenvector for the maximal graph in $\mathcal{B}(n, \Delta)$.

\begin{lemma}\label{eige-ord}
  Let $G$ be a maximal graph in $\mathcal{B}(n, \Delta).$ Let $u$ and $v$ be two vertices in the same part of $G$.
  If $d_{G}(u)>d_{G}(v)$, then $x_u>x_v$, where $x$ is the principal eigenvector of $G$.
\end{lemma}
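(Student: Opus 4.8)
The plan is to argue by contradiction: assume $x_u \le x_v$, and from this produce a graph $G' \in \mathcal{B}(n,\Delta)$ with $\lambda_{1}(G') > \lambda_{1}(G)$, contradicting the maximality of $G$. Let $A,B$ be the two parts of $G$, say $u,v \in A$. Since $d_{G}(u) > d_{G}(v)$ we have $N(u) \not\subseteq N(v)$, so $N(u)\setminus N(v)$ is a nonempty subset of $B$; every vertex in it differs from $v$ and is non-adjacent to $v$. For a nonempty $S \subseteq N(u)\setminus N(v)$ (whose size will be fixed below) set
$$G' = G - \{wu : w\in S\} + \{wv : w\in S\}.$$
Since $S \ne \emptyset$ and $x_v \ge x_u$, Lemma \ref{lem-cve} gives $\lambda_{1}(G') > \lambda_{1}(G)$. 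As the transferred edges all join $A$ and $B$, the graph $G'$ is bipartite with the same bipartition and the same vertex set as $G$, so it remains only to arrange that $G'$ is connected, irregular, and has maximum degree exactly $\Delta$.

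The only degree that increases from $G$ to $G'$ is $d(v)$. If $G$ has a vertex of degree $\Delta$ other than $u$, then a single transfer, $|S| = 1$, already suffices: that vertex still has degree $\Delta$ in $G'$, while $d_{G'}(v) = d_{G}(v)+1 \le d_{G}(u) \le \Delta$, so $\Delta(G') = \Delta$, and $d_{G'}(u) = d_{G}(u)-1 \ge d_{G}(v) \ge 1$, so no vertex is isolated. If instead $u$ is the unique vertex of degree $\Delta$, then $d_{G}(u) = \Delta$ and $|N(u)\setminus N(v)| \ge d_{G}(u) - d_{G}(v) = \Delta - d_{G}(v)$, so we may take $S \subseteq N(u)\setminus N(v)$ with $|S| = \Delta - d_{G}(v) \ge 1$; then $d_{G'}(v) = \Delta$ and $d_{G'}(u) = d_{G}(v) \ge 1$, so again $\Delta(G') = \Delta$ and no vertex is isolated. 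In either case $G'$ has the same number of edges as $G$, hence the same average degree $\bar{d}$; if $G'$ were regular we would have $\lambda_{1}(G') = \bar{d}$, but $\lambda_{1}(G) > \bar{d}$ because $G$ is connected and irregular, contradicting $\lambda_{1}(G') > \lambda_{1}(G)$. Hence $G'$ is irregular, and so $G' \in \mathcal{B}(n,\Delta)$ as soon as it is shown to be connected.

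It therefore remains to choose $S$ so that $G'$ is connected, and this is the main technical point. When $|S| = 1$, I would show that some $w \in N(u)\setminus N(v)$ can be used: if $G - wu + wv$ were disconnected for every such $w$, then $wu$ would be a cut edge of $G$ whose deletion leaves $v$ in the component of $w$ rather than of $u$, which forces $wu$ to be the last edge on every $u$--$v$ path of $G$; for two distinct choices of $w$ this contradicts the uniqueness of the last edge of a path, and if $N(u)\setminus N(v) = \{w\}$ is a singleton one checks directly (using that then $N(v) \subseteq N(u)\setminus\{w\}$ and that every neighbour of $v$ stays joined to $u$ in $G - wu$) that $G'$ is connected. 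When several edges must be transferred — the case where $u$ is the unique vertex of degree $\Delta$ and $d_{G}(v) \le \Delta - 2$ — the argument is similar in spirit: in $G'$ the vertex $v$ is adjacent to all of $N(v) \cup S$, a set of $\Delta$ vertices, and so glues the graph together once $S$ is chosen to avoid detaching a piece of $G - \{wu : w\in S\}$ from $\{v\}\cup N(v)\cup S$. Making this selection of $S$ and verifying that it preserves connectivity is the step I expect to require the most care; everything else follows immediately from Lemma \ref{lem-cve} and the average-degree lower bound for $\lambda_{1}$.
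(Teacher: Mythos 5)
Your overall strategy---assume $x_u\le x_v$, shift edges from $u$ to $v$ via Lemma \ref{lem-cve}, and contradict maximality---is the same as the paper's, and your single-transfer case (some vertex other than $u$ has degree $\Delta$) is essentially complete: the degree bookkeeping is correct, the average-degree argument for irregularity of $G'$ is valid (and more explicit than the paper, which leaves this implicit), and the connectivity argument via ``at most one $w\in N(u)\setminus N(v)$ can be a cut-neighbour separating $u$ from $v$'' works. One small slip there: when $N(u)\setminus N(v)=\{w\}$, the inclusion you actually have is $N(u)\setminus\{w\}\subseteq N(v)$, not $N(v)\subseteq N(u)\setminus\{w\}$; since $d_G(u)\ge 2$ this still yields a common neighbour of $u$ and $v$ surviving in $G-wu$, which is all you need.

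The genuine gap is your Case 2, where $u$ is the unique vertex of degree $\Delta$ and you transfer $|S|=\Delta-d_G(v)\ge 2$ edges at once. You explicitly leave the choice of $S$ and the connectivity of $G'$ unproved (``the step I expect to require the most care''), and as written nothing rules out that the component of $u$ in $G-\{wu:w\in S\}$ avoids both $v$ and every vertex of $S$, in which case $G'$ is disconnected. The clean repair---and the route the paper takes---is to show this case never occurs for a maximal $G$: if the part $B$ opposite to $u$ and $v$ contained no vertex of degree $\Delta$, then for any $z\in B\setminus N(v)$ the graph $G+vz$ would be connected, would keep maximum degree $\Delta$ (witnessed by $u$), would remain irregular (some vertex of $B\setminus\{z\}$ keeps degree $<\Delta$), and would have strictly larger spectral radius, contradicting maximality; hence $v$ would have to be adjacent to all of $B$, giving $d_G(v)=|B|\ge d_G(u)$ and contradicting $d_G(u)>d_G(v)$. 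So $B$ always contains a degree-$\Delta$ vertex, that vertex is distinct from $u$ and unaffected by a single transfer, and only your Case 1 is ever needed. With that observation your proof closes; without it, Case 2 remains open.
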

\begin{proof}
 Suppose to the contrary that $x_u\leq x_v.$ Let $S=N(u)\backslash N(v)$.
 Clearly, $S\neq \emptyset$ and there exists a vertex $w\in S$ such that the graph $G'=G-wu+wv$ is also connected.
 We claim that the other part of $G$ contains a vertex with degree $\Delta$.
 If not, one can see that $v$ is adjacent to all vertices of the other part. However, this contradicts the fact $d_{G}(u)>d_{G}(v)$.
 It follows that the graph $G'$ also belongs to $\mathcal{B}(n,\Delta)$.
   Then Lemma \ref{lem-cve} shows that $\lambda_{1}(G')>\lambda_{1}(G)$, contradicting the maximality of $G$.
\end{proof}

The spectral radius of the maximal graph in $\mathcal{B}(n, \Delta)$ becomes large as $n$ increases.

\begin{lemma}\label{max-ord}
  Let $G$ and $H$ be maximal graphs in $\mathcal{B}(n, \Delta)$ and $\mathcal{B}(n', \Delta)$, respectively.
  If $n>n'$, then $\lambda_{1}(G)>\lambda_{1}(H)$.
\end{lemma}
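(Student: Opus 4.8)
The plan is to reduce the lemma to the following claim: for every graph $H\in\mathcal{B}(n',\Delta)$ there exists a graph $G'\in\mathcal{B}(n,\Delta)$ with $\lambda_{1}(G')>\lambda_{1}(H)$. Granting this, apply it to a maximal graph $H$ of $\mathcal{B}(n',\Delta)$; since $G$ is maximal in $\mathcal{B}(n,\Delta)$ we get $\lambda_{1}(G)\ge\lambda_{1}(G')>\lambda_{1}(H)$, which is exactly what is wanted.

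To prove the claim I would construct $G'$ from $H$ by attaching a path. First observe that $\Delta\ge 2$: a connected graph of maximum degree at most $1$ is $K_{1}$ or $K_{2}$, and both are regular, so neither lies in $\mathcal{B}(n',\Delta)$. Next, since $H$ is irregular with maximum degree $\Delta$, its minimum degree is less than $\Delta$, so there is a vertex $v\in V(H)$ with $d_{H}(v)<\Delta$. Now let $G'$ be obtained from $H$ by adding $n-n'$ new vertices $z_{1},z_{2},\ldots,z_{n-n'}$ together with the edges $vz_{1},z_{1}z_{2},\ldots,z_{n-n'-1}z_{n-n'}$, i.e. by hanging a path of length $n-n'$ at $v$.

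It remains to check that $G'\in\mathcal{B}(n,\Delta)$ and that $\lambda_{1}(G')>\lambda_{1}(H)$. The graph $G'$ has $n$ vertices and is clearly connected; it is bipartite because the bipartition of $H$ extends to the attached path alternately; and its maximum degree is still $\Delta$, since each new vertex has degree at most $2\le\Delta$, $d_{G'}(v)=d_{H}(v)+1\le\Delta$, every other degree is unchanged, and $H$ (hence $G'$) contains a vertex of degree $\Delta$. Moreover $G'$ is irregular because $z_{n-n'}$ has degree $1<\Delta$. Thus $G'\in\mathcal{B}(n,\Delta)$. Finally, $H$ is recovered from $G'$ by successively deleting the pendant vertices $z_{n-n'},z_{n-n'-1},\ldots,z_{1}$, and at each step we delete a vertex from a connected graph and again obtain a connected graph, so the spectral radius strictly decreases at each step by the standard monotonicity (which follows from the Perron--Frobenius theorem by a one-line perturbation of the principal eigenvector). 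Hence $\lambda_{1}(G')>\lambda_{1}(H)$, which proves the claim and the lemma. The argument involves no real obstacle; the only points needing attention are the verification that $G'$ stays irregular with maximum degree exactly $\Delta$ (this is why we chose $v$ with $d_{H}(v)<\Delta$ and first recorded $\Delta\ge 2$), and making sure the monotonicity of the spectral radius under vertex deletion is applied only along a chain of connected graphs.
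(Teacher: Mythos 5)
Your proposal is correct and is essentially the paper's own argument: the paper also picks a vertex $v$ with $d_{H}(v)<\Delta$ and attaches a pendant vertex (reducing to $n'=n-1$ and iterating), then invokes strict monotonicity of the spectral radius and the maximality of $G$. Your only variation is to attach the whole pendant path of length $n-n'$ in one step rather than inducting on $n-n'$, which changes nothing substantive.
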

\begin{proof}
  It suffices to prove the lemma for $n'=n-1$. Since $H$ is irregular, there exists a vertex $v\in H$ with degree less than $\Delta$.
  Add a new vertex $u$ and edge $uv$, clearly the resulting graph $H+uv$ belongs to $\mathcal{B}(n,\Delta)$.
  It follows that $\lambda_{1}(G)\geq \lambda_{1}(H+uv)>\lambda_{1}(H)$, which completes the proof.
\end{proof}

Suppose that, in a bipartite graph, the degree sequences of vertices belonging to distinct parts
are $(a_{1},\ldots,a_{n_{1}})$ and $(b_{1},\ldots,b_{n_{2}})$, respectively.
Then we write $(a_{1},\ldots,a_{n_{1}}\mid b_{1},\ldots,b_{n_{2}})$ to denote the degree sequence of the bipartite graph.
To prove Theorem \ref{main3}, we need to determine the degree sequence of the maximal graph in $\mathcal{B}(n,3)$.
In the maximal graph, if the degree of a vertex is less than 3, we call it an unsaturated vertex.

\begin{lemma}\label{lem-degree}
  If $G$ is a maximal graph in $\mathcal{B}(n,3)$ with $n\geq 6$, then the degree sequence of $G$ is either
  $$(\underbrace{3,3,\ldots,3,2}_{n/2}\mid\underbrace{3,3,\ldots,3,2}_{n/2})$$
  or
  $$(\underbrace{3,3,\ldots,3}_{(n-1)/2}\mid\underbrace{3,3,\ldots,3,2,1}_{(n+1)/2}).$$
\end{lemma}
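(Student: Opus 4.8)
The plan is to start from a maximal graph $G$ in $\mathcal{B}(n,3)$ and control its degree sequence by repeatedly invoking the tools assembled above. First I would record the two elementary constraints: in a bipartite graph the two parts have equal sums of degrees, and since $G$ is connected and irregular with maximum degree $3$, it has at least one unsaturated vertex. Because $G$ is maximal, Corollary \ref{nobadedge} forbids any bad pair of edges, and Lemma \ref{eige-ord} says that within one part the principal eigenvector $x$ strictly orders vertices by degree. The strategy is to show that (i) $G$ has exactly one unsaturated vertex in each part if $n$ is even, or a ``$(2,1)$''-type pair plus an all-$3$ part if $n$ is odd, and (ii) no other configuration of unsaturated vertices can occur. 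Throughout I would use Lemma \ref{max-ord} only indirectly, as motivation that $G$ is ``as dense as possible''.

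The key step is to bound the number of unsaturated vertices. Suppose first that one part, say $X$, contains two vertices $u,u'$ with $d_G(u),d_G(u')\le 2$. Using Lemma \ref{eige-ord} order them so $x_u\ge x_{u'}$. Since $d_G(u')\le 2$ and $G$ is connected on $n\ge 6$ vertices, the other part $Y$ contains a vertex $v$ not adjacent to $u'$; by picking $v$ of degree $3$ (such a $v$ exists because $Y$ cannot be entirely unsaturated — otherwise the degree sum of $Y$ would be too small to match $X$ unless $G$ is very small) one gets a neighbour $v'$ of $u'$ in $N_G(v')\setminus N_G(v)$-type position, and I would exhibit $\{u'v', \text{(an edge at } u)\}$ or directly an eigenvector-increasing swap via Lemma \ref{trans}/\ref{lem-cve} that keeps the graph connected and bipartite, contradicting maximality. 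The careful part is arranging the swap so that connectivity is preserved; here I would argue that among the several edges incident to $u$ or to a degree-$3$ vertex of $Y$, at least one choice avoids disconnecting $G$, using that $G$ has a spanning tree and $n\ge6$ gives enough ``slack''. Symmetrically the same argument rules out two unsaturated vertices in part $Y$ whenever the opposite part has a degree-$3$ vertex.

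From ``at most one unsaturated vertex per part'' the degree sequence is almost pinned down. If both parts have exactly one unsaturated vertex, the equal-degree-sum condition $3|X|-a = 3|Y|-b$ with $a,b\in\{1,2\}$ forces, together with $|X|+|Y|=n$, that $|X|=|Y|=n/2$ (so $n$ even) and $a=b$; a separate short argument — the unsaturated vertices should be as high-degree as possible, again by an eigenvector swap à la Lemma \ref{lem-cve} applied to the two degree-$1$-or-$2$ vertices across the parts — eliminates $a=b=1$, leaving $a=b=2$, which is the first sequence. If some part, say $Y$, is entirely $3$-regular, then $X$ carries all the deficiency; with at most one unsaturated vertex of degree $\ge 1$ in $X$ this would again give $3|X|-c=3|Y|$, impossible for $c\in\{1,2\}$ unless we allow the single unsaturated vertex to have degree $1$ and $|X|=|Y|+1$... — unwinding this, and checking the small residual cases by the bipartite degree-sum parity, yields exactly $|Y|=(n-1)/2$, $|X|=(n+1)/2$ with one vertex of degree $2$ and one of degree $1$ in $X$, the second sequence. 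The main obstacle, as flagged, is the connectivity bookkeeping in the swap arguments: one must verify that the edge-switch producing the contradiction to maximality can always be chosen to keep $G$ connected, and this is where the hypothesis $n\ge 6$ (and the explicit exclusion of $K_{1,3},K_{2,3}$ in the $n=4,5$ cases) is genuinely used.
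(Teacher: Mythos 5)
Your central claimed step --- that a maximal graph has at most one unsaturated vertex per part --- is false, and this breaks the argument. The second admissible degree sequence in the lemma, $(3,\ldots,3\mid 3,\ldots,3,2,1)$, has a degree-$2$ vertex and a degree-$1$ vertex in the \emph{same} part, and the extremal graph $B_{n}$ for odd $n$ realizes exactly this configuration. So whatever edge-switch you construct to ``rule out two unsaturated vertices in part $Y$ whenever the opposite part has a degree-$3$ vertex'' must already fail on $B_{2s+1}$; an argument that establishes your key step would disprove the lemma. Your proposal is internally inconsistent on this point: after asserting the key step you conclude, for odd $n$, that one part contains both a degree-$2$ and a degree-$1$ vertex. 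The reason the naive swap fails is concrete: for two unsaturated vertices $u,u'$ in the same part with $x_u\ge x_{u'}$, moving an edge $u'w$ to $uw$ via Lemma \ref{lem-cve} is impossible when $w$ is already adjacent to $u$ (it would create a multiedge), and this is precisely what happens in $B_{2s+1}$, where the degree-$1$ and degree-$2$ vertices share their neighbour. Moreover, when the swap \emph{is} possible but isolates $u'$ (the degree-$1$ case), the resulting graph leaves $\mathcal{B}(n,3)$, and one must instead compare against the maximal graph on $n-1$ vertices via Lemma \ref{max-ord} --- a step your proposal never invokes. The paper's actual route is different: it proves that nonadjacent unsaturated vertices in \emph{opposite} parts force a very rigid structure (both of degree $2$ and no other unsaturated vertices), that unsaturated vertices of equal degree in the \emph{same} part must share their whole neighbourhood, and then bounds $|X^*|,|Y^*|\le 3$ before the degree-sum case analysis.

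A secondary gap: degree-sum arithmetic alone cannot finish even after the correct structural facts. For instance, one part consisting entirely of degree-$3$ vertices and the other containing three degree-$2$ unsaturated vertices is consistent with the bipartite degree-sum identity; the paper excludes it because the shared-neighbourhood fact forces a $K_{2,3}$ component, contradicting connectivity. Your ``unwinding'' of the residual cases by parity would not catch this. Finally, the connectivity bookkeeping you defer (``at least one choice avoids disconnecting $G$'') is exactly where the paper spends most of its effort, so deferring it leaves the hardest part of the proof undone.
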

\begin{proof}
  Assume that the bipartition of $G$ is $(X,Y)$. Define vertex sets $X^{*}=\{v\in X:d_{G}(v)<3\}$ and $Y^{*}=\{v\in Y:d_{G}(v)<3\}$.
  The vertices in $X^{*}$ and $Y^{*}$ are all unsaturated.
  In order to prove the lemma, we need the following facts.

\medskip

  \noindent\textbf{Fact 1}. If $u\in X^{*}$ and $v\in Y^{*}$ are nonadjacent, then $d_{G}(u)=d_{G}(v)=2$ and $(X^{*}\cup Y^{*})\backslash\{u,v\}=\emptyset$.

  \begin{proof}[Proof of Fact 1]
  If $d_{G}(u)\neq 2$ or $d_{G}(v)\neq 2$, then the graph $G+uv$ also belongs to $\mathcal{B}(n,3)$.
  However, we can see that $\lambda_{1}(G+uv)>\lambda_{1}(G),$ contradicting the maximality of $G$. This implies that $d_{G}(u)=d_{G}(v)=2$.
  If $(X^{*}\cup Y^{*})\backslash\{u,v\}\neq\emptyset$, then $G+uv\in \mathcal{B}(n,3)$ and $\lambda_{1}(G+uv)>\lambda_{1}(G)$,
  which also contradicts the maximality of $G.$
  \end{proof}

  \noindent\textbf{Fact 2}. Let $u$ and $v$ be two unsaturated vertices in the same part. If $d_{G}(u)=d_{G}(v)=1$, then $N(u)=N(v)$.

  \begin{proof}[Proof of Fact 2]
  Without loss of generality, assume that $x_u\geq x_v$. Set $N(v)=\{w\}$.
  If $N(u)\neq N(v)$, then $u$ is not adjacent to $w$. Let $G'=G-vw+uw$. By Lemma \ref{lem-cve}, $\lambda_{1}(G')>\lambda_{1}(G)$.
  Note that $v$ is an isolated vertex in $G'$, and $G'-v$ belongs to $\mathcal{B}(n-1,3)$. Thus
$$\lambda_{1}(G'-v)=\lambda_{1}(G')>\lambda_{1}(G),$$
contradicting Lemma \ref{max-ord}.
  \end{proof}

   \noindent\textbf{Fact 3}. Let $u$ and $v$ be two unsaturated vertices in the same part. If $d_{G}(u)=d_{G}(v)=2$, then $N(u)=N(v)$.

   \begin{proof}[Proof of Fact 3]
    By contradiction, suppose that $N(u)\neq N(v)$. Without loss of generality, we assume that $x_u\geq x_v$.
  Note that $|N(v)\backslash N(u)|=1$ or 2. It is obvious that there is a vertex in $N(v)\backslash N(u)$, say $w$,
  such that the graph $G'=G-vw+uw$ is connected.
  Then the graph $G'$ also belongs to $\mathcal{B}(n,3)$.
  It follows from Lemma \ref{lem-cve} that $\lambda_{1}(G')>\lambda_{1}(G)$, contradicting the maximality of $G$.
   \end{proof}

  \noindent\textbf{Fact 4}. $|X^{*}|\leq 3$ and $|Y^{*}|\leq 3$.

  \begin{proof}[Proof of Fact 4]
  If $X^{*}$ has three vertices with degree 1,
  then Fact 2 implies that these three pendant vertices are adjacent to a common vertex in $Y$.
  Note also that the order of $G$ is at least 6.
  This leads to that $G$ is disconnected, a contradiction. Hence, $X^{*}$ contains at most two vertices of degree 1.
  Similarly, by Fact 3, $X^{*}$ contains at most two vertices of degree 2.

  If $|X^{*}|\geq 4$, then $X^{*}$ contains exactly two vertices of degree 1 and two vertices of degree 2, i.e., $|X^{*}|=4$.
  Then, we choose two vertices $u,v\in X^{*}$ with $d_{G}(u)=2$ and $d_{G}(v)=1$.
  Set $N(v)=\{w\}$. One can see that $w\not\sim u$ (if not, Facts 2 and 3 yield that $d_{G}(w)=4$, a contradiction).
  By Lemma \ref{eige-ord}, $x_{u}>x_{v}$.
  Let $G'=G+uw-vw$. It follows from Lemma \ref{lem-cve} that $\lambda_{1}(G')>\lambda_{1}(G)$.
  Note that $v$ is an isolated vertex in $G'$, and $G'-v\in \mathcal{B}(n-1,3)$.
  According to Lemma \ref{max-ord}, we have $\lambda_{1}(G)>\lambda_{1}(G'-v)=\lambda_{1}(G')$, a contradiction. Therefore, $|X^{*}|\leq 3$.
  A similar argument can assure that $|Y^{*}|\leq 3$.
  \end{proof}

  In the following, we consider the vertices in $X^{*}\cup Y^{*}$. It can be divided into two cases according to whether $X^{*}\cup Y^{*}$ has some vertices with degree 1.

  If there exists a vertex in $X^{*}\cup Y^{*}$ with degree 1, then we assume that $d_{G}(u)=1$ where $u\in X^{*}$.
  We claim that $Y^{*}=\emptyset$. Suppose not. Then it follows from Fact 1 that $|Y^{*}|=1$. Set $Y^{*}=\{v\}$. Obviously, $d_{G}(v)=2$ and $v\sim u$.
  Again by Fact 1, $v$ is adjacent to all vertices of $X^{*}$, and so $|X^{*}|\leq 2$. If $|X^{*}|=1$, then, by counting the number of edges, we have
  $$3(|X|-1)+1=3(|Y|-1)+2.$$
  This implies that $|X|-|Y|=1/3$, a contradiction. If $|X^{*}|=2$, then we assume $X^{*}=\{u,w\}$.
  Since $G$ is connected, we have $d_{G}(w)=2$.
  By counting the number of edges, we have
  $$3(|X|-2)+3=3(|Y|-1)+2,$$
  which yields that $|X|-|Y|=2/3$, a contradiction. Therefore, $Y^{*}=\emptyset$. Let us consider the number of vertices in $X^{*}$.
  Recall that $u\in X^{*}$ and $d_{G}(u)=1$. Suppose that the degree sum of vertices in $X^{*}\backslash \{u\}$ is equal to $a$.
  Since $Y^{*}=\emptyset$, the degree sum of vertices in $X^{*}$ is a multiple of 3, that is, $3|(a+1)$.
  Since $|X^{*}|\leq 3$, it follows that $a=2$.
  As mentioned in the proof of Fact 4, $X^{*}$ cannot contain three vertices of degree 1.
  Thus, $|X^{*}|=2$, and the degrees of vertices in $X^{*}$ are $2$ and $1$.
  By counting the number of edges, we have
  $$3(|X|-2)+a+1=3|Y|,$$ which leads to $|X|=|Y|+1$. Therefore, the degree sequence of $G$ is $(3,\ldots,3,2,1\mid 3,\ldots,3)$, as required.

   Now, suppose that the degree of any vertex in $X^{*}\cup Y^{*}$ is 2.
   We claim that $X^{*}$ and $Y^{*}$ are both nonempty. Assume that $|Y^{*}|=0$. The degree sum of vertices in $X$ is a multiple of 3.
   This implies that $|X^{*}|=3$. According to Fact 3, one can see that $K_{2,3}$ is a component of $G$, which contradicts the connectivity of $G$.
   Therefore, $3\geq |Y^{*}|\geq 1$. Similarly, we have $3\geq |X^{*}|\geq 1$.

   We may assume that $|X^{*}|\geq |Y^{*}|$.
   If $|Y^{*}|\geq 2$, then $|X^{*}|\geq 2$. Choose vertices $u_{1},u_{2}\in X^{*}$ and $v_{1},v_{2}\in Y^{*}$.
    By Fact 1, it is obviously that $u_{i}$ is adjacent to $v_{j}$ for any $1\leq i,j\leq 2$.
    Since $d_{G}(u_{1})=d_{G}(u_{2})=d_{G}(v_{1})=d_{G}(v_{2})=2$, these four vertices form a component of $G$, which contradicts the connectivity of $G$.
    Thus, $|Y^{*}|=1$. If $|X^{*}|=3$, then it follows from Fact 3 that $G$ contains a component $K_{2,3}$, a contradiction.
    Hence $|X^{*}|\leq 2$.
    If $|X^{*}|=2$, by counting the number of edges, then we have
    $$3(|X|-2)+4=3(|Y|-1)+2.$$
  This implies that $|X|-|Y|=1/3$, a contradiction. Therefore, $|X^{*}|=1$, and so the degree sequence of $G$ is $(3,\ldots,3,2\mid 3,\ldots,3,2)$.
\end{proof}

All possible degree sequences of the maximal graph have been determined.
One can see that, in the maximal graph, there are exactly two unsaturated vertices.
The next lemma presents more properties for the maximal graph.

\begin{lemma}\label{lem-nocut}
  Let $G$ be a maximal graph in $\mathcal{B}(n,3)$ with $n\geq 6$.\\
  \noindent(1) If $u$ is a vertex in $G$ with degree 2, then $u$ cannot be incident with a cut edge.\\
  \noindent(2) If $G$ contains a cut edge, and $v,v'$ are the only two unsaturated vertices, then any path from $v$ to $v'$ passes through the cut edge.
\end{lemma}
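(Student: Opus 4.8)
The plan is to deduce both parts from a single parity observation about bipartite graphs, combined with the list of possible degree sequences of $G$ furnished by Lemma~\ref{lem-degree}; notably, none of the eigenvector machinery of this section is needed here.

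The observation I would record first is: \emph{no bipartite graph has exactly one vertex of degree $1$ or $2$ while every other vertex has degree $3$.} Indeed, suppose $H$ were such a graph, with bipartition $(P,Q)$ and exceptional vertex of degree $t\in\{1,2\}$; then $H$ has at least two vertices, so $P\neq\emptyset\neq Q$, and we may assume the exceptional vertex lies in $P$. Counting the edges of $H$ from each side yields $3(|P|-1)+t=|E(H)|=3|Q|$, hence $3(|P|-|Q|)=3-t\in\{1,2\}$, which is absurd.

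For part~(1), recall from Lemma~\ref{lem-degree} that $G$ has exactly two unsaturated vertices; if $d_{G}(u)=2$ then $u$ is one of them, and let $v'$ be the other. Assume for contradiction that $u$ is incident with a cut edge $uw$, with $w$ one of the two neighbours of $u$, and let $G_{1}\ni u$ and $G_{2}\ni w$ be the two components of $G-uw$. Because $uw$ is the only edge of $G$ between $V(G_{1})$ and $V(G_{2})$ and it is incident with both $u$ and $w$, each vertex of $G_{1}$ other than $u$ retains its $G$-degree in $G_{1}$, each vertex of $G_{2}$ other than $w$ retains its $G$-degree in $G_{2}$, and $d_{G_{1}}(u)=1$, $d_{G_{2}}(w)=d_{G}(w)-1$. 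Now $v'$ lies in exactly one of $G_{1},G_{2}$. If $v'\in V(G_{2})$, then every vertex of $G_{1}$ except $u$ is saturated, so $G_{1}$ is a bipartite graph with exactly one vertex of degree $<3$, namely $u$ of degree~$1$; this is forbidden by the observation. If $v'\in V(G_{1})$, then every vertex of $G_{2}$ is saturated, whence $d_{G}(w)=3$, so $d_{G_{2}}(w)=2$ while all other vertices of $G_{2}$ have degree~$3$ (and $G_{2}$ has at least two vertices, as otherwise $w$ would be a degree-$1$, hence unsaturated, vertex of $G_{2}$); again this is forbidden. Either way we obtain a contradiction, which proves part~(1).

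Part~(2) is the same computation applied across the cut edge. Suppose $G$ has a cut edge $e=ab$ and, toward a contradiction, that some path from $v$ to $v'$ avoids $e$; then $v$ and $v'$ lie in a common component $G_{1}$ of $G-e$, and we let $G_{2}$ be the other component, with $b\in V(G_{2})$. Since $v,v'\notin V(G_{2})$, every vertex of $G_{2}$ is saturated, so $d_{G}(b)=3$, hence $d_{G_{2}}(b)=2$ while all other vertices of $G_{2}$ have degree~$3$ --- contradicting the observation. Therefore every path from $v$ to $v'$ passes through $e$.

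No serious difficulty arises; the two points to watch are (i) verifying that, once the second unsaturated vertex $v'$ has been located on one side of the relevant cut edge, the other side really does have \emph{exactly one} vertex of non-maximum degree --- this is precisely where ``$G$ has exactly two unsaturated vertices'' from Lemma~\ref{lem-degree} is used --- and (ii) discarding the degenerate case in which a component of $G$ minus the cut edge is a single vertex, which is immediate since that vertex would be a pendant, and hence an unsaturated, vertex sitting on the wrong side.
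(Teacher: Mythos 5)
Your proof is correct and follows essentially the same route as the paper's: split $G$ minus the cut edge into two components, use Lemma~\ref{lem-degree} to place the two unsaturated vertices, and derive a contradiction from the fact that a bipartite graph cannot have degree sequence $(3,\ldots,3,2)$ or $(3,\ldots,3,1)$. The only difference is that you make explicit the divisibility argument behind that last fact (and the degenerate single-vertex component case), which the paper asserts without proof.
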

\begin{proof}
  (1) By contradiction, assume that $u$ is incident with a cut edge $e$ in $G$. Let $H_{1}$ and $H_{2}$ be the two components in $G-e$, where $u\in V(H_{1})$.
  Clearly, $H_{1}$ and $H_{2}$ are both bipartite graphs.
  Note that $(3,\ldots,3,2)$ and $(3,\ldots,3,1)$ cannot be the degree sequence of a bipartite graph.
  According to Lemma \ref{lem-degree}, we can see that, in $G$, there is another vertex $v$ with degree $d_{G}(v)<3$.
  If $v\in V(H_{1})$, then the degree sequence of the induced subgraph $H_{2}$ is $(3,3,\ldots,3,2)$, a contradiction.
  On the other hand, if $v\in V(H_{2})$, then the degree sequence of the induced subgraph $H_{1}$ is $(3,3,\ldots,3,1)$, a contradiction.
  Then statement (1) holds.

  (2) Suppose that $e$ is a cut edge in $G$. Let $H_{1}$ and $H_{2}$ be the two components in $G-e$.
  If $v$ and $v'$ belong to the same component, say $H_{1}$, then $d_{G}(w)=3$ for each vertex $w\in V(H_{2})$,
  hence the degree sequence of the induced subgraph $H_{2}$ is $(3,3,\ldots,3,2)$, a contradiction.
  Thus, $v$ and $v'$ belong to the distinct components, and so the path from $v$ to $v'$ must passes through $e$. The statement (2) is proved.
\end{proof}

Form Lemma \ref{lem-nocut} (2), we obtain the following result directly.

\begin{corollary}\label{nocut}
  Let $G$ be a maximal graph in $\mathcal{B}(n,3)$ with $n\geq 6$. Suppose that $u$ and $v$ are the only two unsaturated vertices.
  If $u$ is adjacent to $v$, then, except for the edge $uv$, there is no cut edge in $G$.
\end{corollary}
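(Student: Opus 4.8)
The plan is to argue by contradiction and invoke Lemma \ref{lem-nocut}(2) essentially verbatim. Suppose, contrary to the claim, that $G$ possesses a cut edge $e$ different from the edge $uv$. Since $u$ and $v$ are by hypothesis the only two unsaturated vertices of $G$, the hypotheses of Lemma \ref{lem-nocut}(2) are satisfied, and that lemma tells us that every path in $G$ joining $u$ to $v$ must pass through the cut edge $e$. On the other hand, because $uv\in E(G)$, the single edge $uv$ is itself a $u$--$v$ path, and the only edge it traverses is $uv$; hence $e=uv$, contradicting the choice of $e$. Therefore $G$ has no cut edge other than possibly $uv$, which is exactly the asserted statement.

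I do not anticipate any real obstacle here: the entire structural content is already carried by Lemma \ref{lem-nocut}(2), and the reduction ``any cut edge must coincide with $uv$'' is immediate once one observes that the edge $uv$ supplies a $u$--$v$ path through which, by the lemma, every cut edge is forced to pass. The only point that warrants a sentence of care is verifying that the premises of Lemma \ref{lem-nocut}(2) genuinely hold in the present setting --- namely that $u$ and $v$ are precisely the two unsaturated vertices of $G$, which is part of the corollary's hypothesis --- so that the lemma may be applied without further ado.
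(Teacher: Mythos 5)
Your argument is correct and is exactly how the paper obtains this corollary: it is stated as a direct consequence of Lemma \ref{lem-nocut}(2), with the edge $uv$ serving as a $u$--$v$ path that forces any cut edge to coincide with $uv$. Nothing further is needed.
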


\begin{figure}[t]
\begin{center}
{\includegraphics{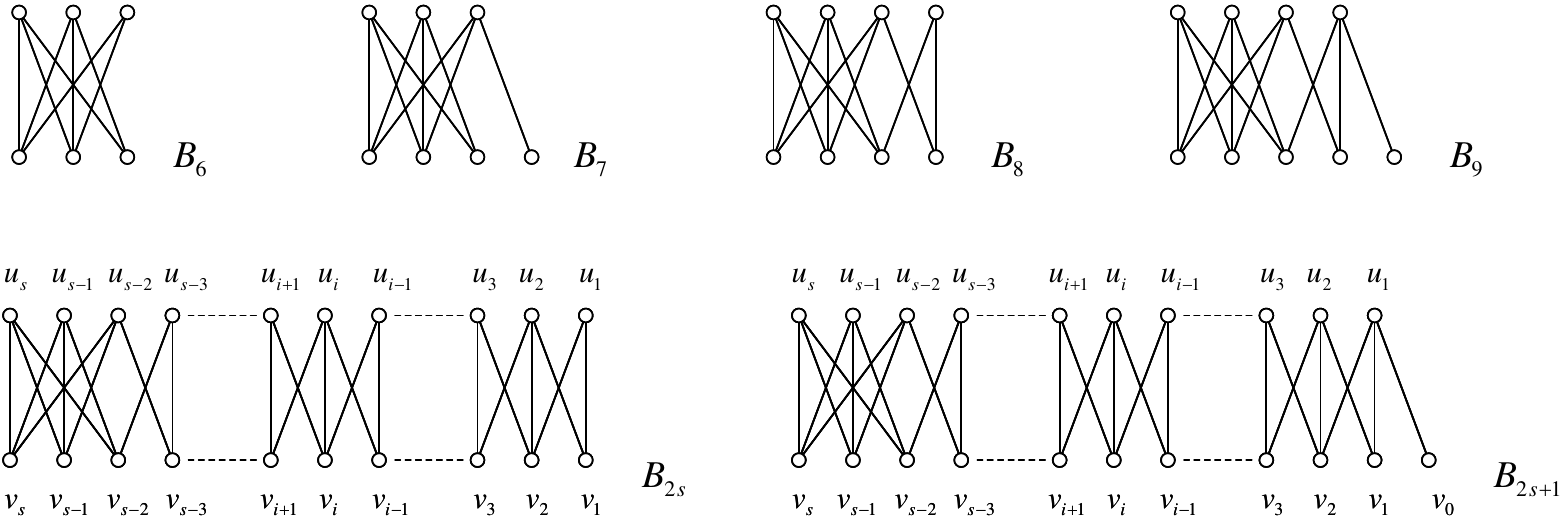}}
\end{center}
\caption{Subcubic bipartite graphs.}
\label{fig1}
\end{figure}

For $n\geq 6$, let $B_{n}$ be a connected subcubic bipartite graph defined as follows:
\begin{itemize}
  \item $B_{6}$ is obtained from $K_{3,3}$ by deleting an edge.
  \item When $n\geq 7$ and $n$ is odd, $B_{n}$ is obtained from $B_{n-1}$ by adding a new vertex and joining it to one of the unsaturated vertices in $B_{n-1}$.
  \item When $n\geq 8$ and $n$ is even, $B_{n}$ is obtained from $B_{n-1}$ by adding a new vertex and joining it to all unsaturated vertices in $B_{n-1}$.
\end{itemize}
For example, subcubic bipartite graphs $B_{6},B_{7},B_{8},B_{9}$ are presented in Figure \ref{fig1}. We are now ready to prove the maximal graph.

\begin{proof}[\bf Proof of Theorem \ref{main3}]
  Let $G$ be a maximal graph in $\mathcal{B}(n,3)$. Lemma \ref{lem-degree} shows that the degree sequence of $G$ is either
  $(3,3,\ldots,3,2\mid 3,3,\ldots,3,2)$ or $(3,3,\ldots,3\mid 3,3,\ldots,3,2,1)$. Suppose that $x$ is a principal eigenvector of $G$.
 We establish the structure of $G$ by analyzing the eigenvector in each case.

 \medskip

\noindent\textbf{Case 1.} The degree sequence of $G$ is $(3,3,\ldots,3,2\mid 3,3,\ldots,3,2)$.

\medskip

If $n=6$, then it is easy to see that $G\cong B_{6}$. In the following, we assume that $n>6$.
Suppose that $G$ has the bipartition $(U,V)$, where $U=\{u_{1},u_{2},\ldots,u_{s}\}$ and $V=\{v_{1},v_{2},\ldots,v_{s}\}$. Without loss of generality, we assume that
$$x_{u_{1}}\leq x_{u_{2}}\leq\cdots\leq x_{u_{s}}~~~~\text{and}~~~~x_{v_{1}}\leq x_{v_{2}}\leq\cdots\leq x_{v_{s}}.$$

\noindent\textbf{Claim 1.} For $1\leq i\leq s-3$, we obtain that $u_{i}\sim v_{i}$, $u_{i}\sim v_{i+1}$, $v_{i}\sim u_{i+1}$ and the eigenvector $x$ satisfies that
$$x_{u_{i}}<x_{u_{i+1}}<x_{u_{i+2}}\leq \cdots\leq x_{u_{s}}~~~~\text{and}~~~~x_{v_{i}}<x_{v_{i+1}}<x_{v_{i+2}}\leq\cdots\leq x_{v_{s}}.$$

\begin{proof}[Proof of Claim 1]

We prove the claim by induction on $i$. By Lemma \ref{eige-ord}, it follows that $d_{G}(u_{1})=d_{G}(v_{1})=2$,
$$x_{u_{1}}<\min\{x_{u_{i}}:2\leq i\leq s\}~~~~\text{and}~~~~x_{v_{1}}<\min\{x_{v_{i}}:2\leq i\leq s\}.$$

We now show that $u_{1}\sim v_{1}$. By contradiction, suppose that $u_{1}\not \sim v_{1}$.
Since $n>6$, we can find two nonadjacent vertices $v\in N(u_{1})$ and $u\in N(v_{1})$.
Let $\mathsf{P}$ be a shortest path from $u_{1}$ to $u$ in $G$.
If the path $\mathsf{P}$ passes through vertices $v_{1}$ and $v$ simultaneously,
then the graph $G+\{u_{1}v_{1},uv\}-\{u_{1}v,uv_{1}\}$ is obviously connected.
Moreover, the graph $G+\{u_{1}v_{1},uv\}-\{u_{1}v,uv_{1}\}$ is also connected,
when $v_{1}$ and $v$ are both not appeared in $\mathsf{P}$.
Since $x_{u_{1}}<x_{u}$ and $x_{v_{1}}<x_{v}$, $\{u_{1}v,uv_{1}\}$ is a bad pair of edges in $G$,
contradicting Corollary \ref{nobadedge}.
Suppose that exactly one of vertices $v_{1}$ and $v$ belongs to $\mathsf{P}$.
Set $N(u_{1})=\{v,v'\}$. If $v_{1}$ belongs to $\mathsf{P}$, then $\mathsf{P}=u_{1}v'\mathsf{P}v_{1}u$.
Thus, $G+\{u_{1}v_{1},uv'\}-\{u_{1}v',uv_{1}\}$ is connected
since the vertices of $\{u_{1},v_{1},u,v'\}$ are connected by a path $u_{1}v_{1}\mathsf{P}v'u$ in this graph.
On the other hand, if $v$ belongs to $\mathsf{P}$, then $\mathsf{P}=u_{1}v\mathsf{P}u$.
Since $v_{1}u_{1}v\mathsf{P}uv'$ is a path in $G+\{u_{1}v_{1},uv'\}-\{u_{1}v',uv_{1}\}$, this graph is connected.
Note also that $x_{u_{1}}<x_{u}$ and $x_{v_{1}}<x_{v'}$.
Thus, $\{u_{1}v',uv_{1}\}$ is a bad pair in $G$, which contradicting Corollary \ref{nobadedge}.
Therefore, we obtain that $u_{1}\sim v_{1}$.

Let $N(u_{1})=\{v_{1},v\}$. Clearly, $v\in \{v_{2},\ldots,v_{s}\}$.
We claim that $x_{v}<\min\{x_{w}:w\in \{v_{2},\ldots, v_{s}\}\}$.
Suppose not, and let $v^{*}\in \{v_{2},\ldots,v_{s}\}\backslash\{v\}$ with $x_{v^{*}}\leq x_{v}$.
Since $d_{G}(v^{*})=d_{G}(v)=3$ and $u_{1}\not\sim v^{*}$, it is obvious that $N(v^{*})\backslash N(v)\neq \emptyset$.
Let $u\in N(v^{*})\backslash N(v)$. Then $u\sim v^{*}$ and $u\not\sim v$.
Let $\mathsf{P}$ be a shortest path from $v$ to $v^{*}$ in $G$.

If the vertices $u_{1}$ and $u$ both belong to (or both not belong to) the path $\mathsf{P}$,
then it is easy to see that the graph $G+\{u_{1}v^{*},uv\}-\{u_{1}v,uv^{*}\}$ is connected.
Since $x_{u}>x_{u_{1}}$ and $x_{v}\geq x_{v^{*}}$, $\{u_{1}v,uv^{*}\}$ forms a bad pair of edges in $G$, contradicting Corollary \ref{nobadedge}.

Suppose that exactly one of vertices $u_{1}$ and $u$ belongs to the path $\mathsf{P}$.
In this case, one can see that $N(v^{*})\cap N(v)=\emptyset$.
If $u_{1}$ belongs to the path $\mathsf{P}$,
then $\mathsf{P}$ contains a vertex $u^{*}\in N(v^{*})\backslash \{u\}$, where $u^{*}\not\sim v$ and $x_{u^{*}}>x_{u_{1}}$.
Hence, $G+\{u^{*}v,u_{1}v^{*}\}-\{u^{*}v^{*}, u_{1}v\}$ is connected,
which yielding that $\{u^{*}v^{*}, u_{1}v\}$ is a bad pair of edges in $G$, a contradiction.
Suppose that $u$ belongs to the path $\mathsf{P}$. Let us choose a vertex $u^{**}\in N(v^{*})\backslash \{u\}$.
Thus, $u^{**}v\mathsf{P}uv^{*}u_{1}$ is a path in $G+\{u^{**}v,u_{1}v^{*}\}-\{u^{**}v^{*}, u_{1}v\}$, and so this graph is connected.
Moreover, since $x_{u^{**}}>x_{u_{1}}$ and $x_{v}\geq x_{v^{*}}$, $\{u^{**}v^{*}, u_{1}v\}$ is a bad pair of edges in $G$, a contradiction.

In above, we both obtain contradictions, hence $x_{v}<\min\{x_{w}:w\in \{v_{2},\ldots, v_{s}\}\}$.
That is $u_{1}\sim v_{2}$ and $x_{v_{2}}<x_{v_{3}}\leq \cdots\leq x_{v_{s}}$.
A similar argument shows that $v_{1}\sim u_{2}$,
and $x_{u_{2}}<x_{u_{3}}\leq \cdots\leq x_{u_{s}}$.
Therefore, Claim 1 holds for $i=1$.

Assume that Claim 1 holds for $i<k$. Now, let us consider the case $i=k$. Thus, we have
$$x_{u_{1}}<x_{u_{2}}<\cdots<x_{u_{k-1}}<x_{u_{k}}<x_{u_{k+1}}\leq\cdots\leq x_{u_{s}}$$
and
$$x_{v_{1}}<x_{v_{2}}<\cdots<x_{v_{k-1}}<x_{v_{k}}<x_{v_{k+1}}\leq\cdots\leq x_{v_{s}}.$$
Moreover, the neighbours of any vertex in $\{u_{1},\ldots,u_{k-1},v_{1},\ldots, v_{k-1}\}$ are determined.
In particular, we also obtain that $u_{k}\sim v_{k-1}$ and $v_{k}\sim u_{k-1}$.
The remaining two neighbours of $u_{k}$ (resp. $v_{k}$) belong to $\{v_{i}: i\geq k\}$ (resp. $\{u_{i}:i\geq k\}$).

To prove $u_{k}\sim v_{k}$. By contradiction, we may assume that $u_{k}\not\sim v_{k}$.
Clearly, $u_{k}$ has two neighbours in $\{v_{k+1},\ldots,v_{s}\}$,
and $v_{k}$ has two neighbours in $\{u_{k+1},\ldots,u_{s}\}$.
Since $k\leq s-3$, we can find two vertices $v\in N(u_{k})\cap \{v_{k+1},\ldots,v_{s}\}$ and $u\in N(v_{k})\cap \{u_{k+1},\ldots,u_{s}\}$ such that $u\not\sim v$.
Assume that $N(u_{k})=\{v_{k-1},v,v'\}$.
Note that $x_{u_{k}}<x_{u}$ and $x_{v_{k}}<x_{v}$.
If $G-\{u_{k}v,v_{k}u\}$ is connected, then $G+\{u_{k}v_{k},uv\}-\{u_{k}v,v_{k}u\}$ is also connected,
and so $\{u_{k}v,v_{k}u\}$ is a bad pair of edges in $G$, a contradiction.
So, we assume that $G-\{u_{k}v,v_{k}u\}$ is disconnected.
By Corollary \ref{nocut}, it is easy to see that $G-\{u_{k}v,v_{k}u\}$ contains two components.
Suppose that $H_{1}$ and $H_{2}$ are the two components in $G-\{u_{k}v,v_{k}u\}$,
where $\{u_{k},v_{k},v'\}\subset H_{1}$ and $\{u,v\}\subset H_{2}$.
Let $\mathsf{P}$ be a path from $u$ to $v$ in $H_{2}$.
Consider the graph $G+\{u_{k}v_{k},uv'\}-\{u_{k}v',uv_{k}\}$.
Since $v_{k}u_{k}v\mathsf{P}uv'$ is a path in the graph $G+\{u_{k}v_{k},uv'\}-\{u_{k}v',uv_{k}\}$,
this graph is connected. Moreover, since $x_{v'}>x_{v_{k}}$ and $x_{u}>x_{u_{k}}$,
it follows that $\{u_{k}v',uv_{k}\}$ is a bad pair edges of $G$, a contradiction.
Therefore, we obtain that $u_{k}\sim v_{k}$.

Suppose, now, that $N(u_{k})=\{v_{k-1},v_{k},v\}$ where $v\in \{v_{k+1},\ldots,v_{s}\}$.
We will show that $$x_{v}<\min\{x_{w}:w\in \{v_{k+1},\ldots,v_{s}\}\backslash\{v\}\}.$$
Suppose not, and let $w\in \{v_{k+1},\ldots,v_{s}\}\backslash\{v\}$ with $x_{w}\leq x_{v}$.
Since $d_{G}(w)=d_{G}(v)=3$, it is obvious that $N(w)\backslash N(v)\neq \emptyset$.
Let $u\in N(w)\backslash N(v)$. Then $u\sim w$ and $u\not\sim v$.
If $G+\{u_{k}w,uv\}-\{u_{k}v,uw\}$ is connected, then $\{u_{k}v,uw\}$ is a bad pair of edges in $G$, which contradicts Corollary \ref{nobadedge}.
Suppose that $G+\{u_{k}w,uv\}-\{u_{k}v,uw\}$ is disconnected.
Thus, it is easy to see that $G-\{u_{k}v,uw\}$ contains exactly two components $H_{1}$ and $H_{2}$,
where $\{u_{k},w\}\subset H_{1}$ and $\{u,v\}\subset H_{2}$.
Let $\mathsf{P}$ be a path from $u$ to $v$ in $H_{2}$.
Choose a vertex $u'\in N(w)\backslash \{u\}$, we can see that $u'$ belongs to $H_{1}$.
Obviously, $u_{k}wu\mathsf{P}vu'$ forms a path in the graph $G+\{u_{k}w, u'v\}-\{u_{k}v,u'w\}$, and so this graph is connected.
Note also that $x_{u'}>x_{u_{k}}$ and $x_{v}\geq x_{w}$. Thus, $\{u_{k}v,u'w\}$ is a bad pair of edges in $G$, a contradiction.
Therefore, $x_{v}<\min\{x_{w}:w\in \{v_{k+1},\ldots,v_{s}\}\backslash\{v\}\}$, that is, $v=v_{k+1}$.
 Hence, $u_{k}\sim v_{k+1}$ and $x_{v_{k+1}}<x_{v_{k+2}}\leq \cdots\leq x_{v_{s}}$.
 A similar argument yields that $v_{k}\sim u_{k+1}$ and $x_{u_{k+1}}<x_{u_{k+2}}\leq \cdots\leq x_{u_{s}}$.
Thus, the proof of Claim 1 is completed.
\end{proof}

In summary, Claim 1 presents the adjacency relationships for all vertices in $\{u_{1},\ldots,u_{s-3},v_{1},\ldots,v_{s-3}\}$,
and shows that $u_{s-2}\sim v_{s-3}$ and $v_{s-2}\sim u_{s-3}$.
Then it is easy to see that the subgraph of $G$ induced by $\{u_{s-2},u_{s-1},u_{s},v_{s-2},v_{s-1},v_{s}\}$ must be isomorphic to $K_{3,3}-u_{s-2}v_{s-2}$.
Therefore, $G\cong B_{2s}$, as shown in Figure \ref{fig1}.

\medskip

\noindent\textbf{Case 2.} The degree sequence of $G$ is $(3,3,\ldots,3 | 3,3,\ldots,3,2,1)$.

\medskip

Assume that $G$ has the bipartition $(U,V)$, where $U=\{u_{1},u_{2},\ldots,u_{s}\}$ and $V=\{v_{0},v_{1},v_{2},\ldots,v_{s}\}$. Suppose that
$$x_{u_{1}}\leq x_{u_{2}}\leq\cdots\leq x_{u_{s}}~~~~\text{and}~~~~x_{v_{0}}\leq x_{v_{1}}\leq x_{v_{2}}\leq\cdots\leq x_{v_{s}}.$$
By Lemma \ref{eige-ord}, we can see that $d_{G}(v_{0})=1$, $d_{G}(v_{1})=2$ and $x_{v_{0}}<x_{v_{1}}< x_{v_{2}}\leq\cdots\leq x_{v_{s}}$.
Suppose that $N(v_{0})=u$. We claim that $x_{u}<\min\{x_{w}:w\in \{u_{1},\ldots,u_{s}\}\backslash\{u\}\}$.
If not, assume that $x_{u_{j}}\leq x_{u}$ where $u_{j}\in \{u_{1},\ldots,u_{s}\}\backslash\{u\}$.
Since $d_{G}(u)=d_{G}(u_{j})=3$, there is a vertex in $v\in N(u_{j})$ such that $u\not\sim v$.
According to Lemma \ref{lem-nocut}, one can see that $v_{0}u$ is the only one cut edge in $G$.
This implies that $G-v_{0}$ is 2-connected (otherwise, if there is a cut edge in $G-v_{0}$, then it is also a cut edge in $G$).
Let us consider the graph $G+\{v_{0}u_{j},vu\}-\{v_{0}u,vu_{j}\}$.
Since $G-v_{0}$ is 2-connected, the graph $G+\{v_{0}u_{j},vu\}-\{v_{0}u,vu_{j}\}$ is obviously connected.
Since $x_{v_{0}}<x_{v}$ and $x_{u_{j}}\leq x_{u}$,
$\{v_{0}u,vu_{j}\}$ is a bad pair of edges in $G$, a contradiction.
Therefore, we obtain that $u=u_{1}$ and $x_{u_{1}}<x_{u_{2}}\leq \cdots\leq x_{u_{s}}$.
By a simple inductive argument similar to the one used in Case 1, one can derive the adjacency relationships of the remaining vertices.
The graph $G$ is isomorphic to $B_{2s+1}$, which is displayed in Figure \ref{fig1}.
\end{proof}

\section{Spectral radius of $k$-connected irregular graphs}

Let us recall an inequality proposed by Shi \cite[Lemma 1]{Shi2009}. If $a,b>0$, then
\begin{equation}\label{eq4}
  a(p-q)^{2}+bq^{2}\geq\frac{abp^{2}}{a+b},
\end{equation}
with equality if and only if $q=ap/(a+b)$. Multiplying both side by $a+b$, we obtain that the above inequality is reduced to $[ap-(a+b)q]^{2}\geq 0$.
The inequality then follows directly.

Now, we are ready to present the proof of Theorem \ref{main1}.

\begin{proof}[\bf Proof of Theorem \ref{main1}]
Suppose that $x$ is a unit principal eigenvector of $G$.
  We may assume that $\hat{u}$ and $\hat{v}$ are two vertices in $G$ such that
  $x_{\hat{u}}=\max\{x_w:w\in V(G)\}$ and $x_{\hat{v}}=\min\{x_w:w\in V(G)\}$.
  If $d_{G}(\hat{u})<\Delta$, then
  $$\lambda_{1}(G)x_{\hat{u}}=\sum_{w\in N(u)}x_w\leq (\Delta-1)x_{\hat{u}},$$
  which leads to $\lambda_{1}(G)\leq \Delta-1$. Thus, the result holds since the right side of the inequality (\ref{eq0}) is less than one.

  In the following, we assume that $d_{G}(\hat{u})=\Delta$. Since $x$ is a principal eigenvector of $G$, we have
  \begin{eqnarray}\label{eq1}
    \lambda_{1}(G)=2\sum_{uv\in E(G)}x_{u}x_{v}=\sum_{uv\in E(G)}(x_{u}^{2}+x_{v}^{2})-\sum_{uv\in E(G)}(x_{u}-x_{v})^{2}
    =\sum_{w\in V(G)}d_{G}(w)x_{w}^{2}-\sum_{uv\in E(G)}(x_{u}-x_{v})^{2}.
  \end{eqnarray}
  Thus, it follows from (\ref{eq1}) that
  \begin{eqnarray}\label{eq2}
    \Delta-\lambda_{1}(G)&=&\Delta-\sum_{w\in V(G)}d_{G}(w)x_{w}^{2}+\sum_{uv\in E(G)}(x_{u}-x_{v})^{2}\nonumber\\
    &=&\sum_{w\in V(G)}(\Delta-d_{G}(w))x_{w}^{2}+\sum_{uv\in E(G)}(x_{u}-x_{v})^{2}\nonumber\\
    &\geq &(n\Delta-2m)x_{\hat{v}}^{2}+\sum_{uv\in E(G)}(x_{u}-x_{v})^{2}.
  \end{eqnarray}
One can see that $\sum_{uv\in E(G)}(x_{u}-x_{v})^{2}>0$ as $G$ is irregular. If $$x_{\hat{v}}^{2}\geq \frac{k^{2}}{(n\Delta-2m)\left((n-1)^{2}-(n-k-1)(\Delta-k+1)\right)+nk^{2}},$$
then the inequality (\ref{eq2}) and the fact $\sum_{uv\in E(G)}(x_{u}-x_{v})^{2}>0$ show that
$$\Delta-\lambda_{1}(G)>(n\Delta-2m)x_{\hat{v}}^{2}\geq \frac{(n\Delta-2m)k^{2}}{(n\Delta-2m)\left((n-1)^{2}-(n-k-1)(\Delta-k+1)\right)+nk^{2}},$$
and hence the result follows. Note also that $|N(\hat{v})|\geq k$. Let $w_{1},w_{2},\ldots,w_{k}$ be $k$ vertices in $N(\hat{v})$.
It is obvious that
  \begin{eqnarray}\label{eq3}
    \sum_{uv\in E(G)}(x_{u}-x_{v})^{2}\geq \sum_{i=1}^{k}(x_{w_{i}}-x_{\hat{v}})^{2}.
  \end{eqnarray}
Combining (\ref{eq2}) and (\ref{eq3}), we have
\begin{eqnarray}\label{eq5}
    \Delta-\lambda_{1}(G)&\geq& (n\Delta-2m)x_{\hat{v}}^{2}+\sum_{i=1}^{k}(x_{w_{i}}-x_{\hat{v}})^{2} \nonumber\\
    &=&\sum_{i=1}^{k}\left(\frac{n\Delta-2m}{k}x_{\hat{v}}^2+(x_{w_i}-x_{\hat{v}})^2\right)\nonumber\\
    &\geq& \sum_{i=1}^{k}\frac{n\Delta-2m}{n\Delta-2m+k}x_{w_{i}}^2,
  \end{eqnarray}
  where the last inequality follows from (\ref{eq4}) with $a=1$, $b=\frac{n\Delta-2m}{k}$, $p=x_{w_{i}}$ and $q=x_{\hat{v}}$.
If
$$\sum_{i=1}^{k}x_{w_{i}}^2>\frac{(n\Delta-2m)k^{2}+k^{3}}{(n\Delta-2m)\left((n-1)^{2}-(n-k-1)(\Delta-k+1)\right)+nk^{2}},$$
then the result follows from (\ref{eq5}). So, in the following, we may assume that
$$x_{\hat{v}}^{2}<\frac{k^{2}}{(n\Delta-2m)\left((n-1)^{2}-(n-k-1)(\Delta-k+1)\right)+nk^{2}}$$
and
$$\sum_{i=1}^{k}x_{w_{i}}^2\leq \frac{(n\Delta-2m)k^{2}+k^{3}}{(n\Delta-2m)\left((n-1)^{2}-(n-k-1)(\Delta-k+1)\right)+nk^{2}}.$$
Since $k\leq n-2$, the set $V(G)\setminus\{\hat{v},w_{1},w_{2},\ldots,w_{k}\}$ is nonempty.
   Thus, we have
   \begin{equation}\label{eq6}
   x_{\hat{u}}^{2}\geq \frac{1-x_{\hat{v}}^{2}-\sum_{i=1}^{k}x_{w_{k}}^{2}}{n-k-1}.
   \end{equation}
   By (\ref{eq6}), we obtain that
\begin{equation}\label{eq9}
  x_{\hat{u}}^{2}>\left(1-\frac{(n\Delta-2m+1)k^{2}+k^{3}}{(n\Delta-2m)\left((n-1)^{2}-(n-k-1)(\Delta-k+1)\right)+nk^{2}}\right)/(n-k-1).
\end{equation}
 Since $G$ is $k$-connected, Menger's theorem shows that there are $k$ internally vertex disjoint paths from $\hat{u}$ to $\hat{v}$.
  Let $P_{1},P_{2},\ldots,P_{k}$ be internally vertex disjoint $(\hat{u},\hat{v})$-paths,
  such that $|N(\hat{u})\cap V(P_{i})|=1$ for $1\leq i\leq k$.
  Apart from these paths, there are at least $\Delta-k$ vertices.
  Thus, $$|V(P_{1}\cup P_{2}\cup \cdots\cup P_{k})|+\Delta-k\leq n.$$
  Note that $\sum_{i=1}^{k}|V(P_{i})|=|V(P_{1}\cup P_{2}\cup \cdots\cup P_{k})|+2(k-1)$. It follows that
  $$\sum_{i=1}^{k}|V(P_{i})|\leq n+3k-\Delta-2.$$
  Therefore, it follows from Cauchy-Schwarz inequality that
  \begin{eqnarray}\label{eq7}
    \sum_{uv\in E(G)}(x_{u}-x_{v})^{2}&\geq& \sum_{i=1}^{k}\sum_{uv\in E(P_{i})}(x_{u}-x_{v})^{2}\nonumber\\
     &\geq& \frac{1}{\sum_{i=1}^{k}(|V(P_{i})|-1)}\left(\sum_{i=1}^{k}\sum_{uv\in E(P_{i})}(x_{u}-x_{v})\right)^{2}\nonumber\\
     &=&\frac{k^2}{\sum_{i=1}^k(|V(P_t)|-1)}(x_{\hat{u}}-x_{\hat{v}})^2\nonumber\\
     &\geq& \frac{k^2}{n-\Delta+2k-2}(x_{\hat{u}}-x_{\hat{v}})^2.
  \end{eqnarray}
Combining (\ref{eq2}) and (\ref{eq7}), it follows that
\begin{eqnarray}\label{eq8}
  \Delta-\lambda_{1}(G) &\geq&(n\Delta-2m)x_{\hat{v}}^{2}+\frac{k^2}{n-\Delta+2k-2}(x_{\hat{u}}-x_{\hat{v}})^2\nonumber\\
  &\geq& \frac{k^2(n\Delta-2m)}{(n\Delta-2m)(n-\Delta+2k-2)+k^{2}}x_{\hat{u}}^{2},
\end{eqnarray}
 where the last inequality follows from (\ref{eq4}) with $a=\frac{k^{2}}{n-\Delta+2k-2}$, $b=n\Delta-2m$, $p=x_{\hat{u}}$ and $q=x_{\hat{v}}$.
 Replacing (\ref{eq9}) in (\ref{eq8}), we have
$$\Delta-\lambda_{1}(G)>\frac{(n\Delta-2m)k^{2}}{(n\Delta-2m)\left((n-1)^{2}-(n-k-1)(\Delta-k+1)\right)+nk^{2}},$$
as required.
\end{proof}

\noindent\textbf{Remark.} Let us compare the previous bounds and our bound obtained in Theorem \ref{main1}. Take $k=1$ in Theorem \ref{main1},
the lower bound (\ref{eq0}) is equivalent to
$$\Delta-\lambda_{1}(G)>\frac{1}{(n-1)^{2}-(n-2)\Delta+\frac{n}{n\Delta-2m}}.$$
Since $n\Delta-2m\geq 1$ and $\Delta>1$,
we have
$$(n-1)^{2}-(n-2)\Delta+\frac{n}{n\Delta-2m}<(n-1)^{2}-(n-2)+n<2n(n-1)<2n(n\Delta-1)\Delta^{2},$$
which implies that the lower bound (\ref{eq0}) is better than the bound (\ref{eq18}) in Theorem \ref{th1}.
Moreover, since $$(n-1)^{2}-(n-k-1)(\Delta-k+1)<n^{2}-2n+2k,$$ the lower bound (\ref{eq0}) is also better than the bound (\ref{eq19}) in Theorem \ref{th2}.
Generally, the lower bounds (\ref{eq0}) and (\ref{eq22}) are incomparable.
All trees with 9 vertices were exhibited in \cite[Appendix Table A4]{Cvetkovic2010}.
For nine of these trees, with serial numbers $\{53, 59, 61-64, 67, 72, 74\}$, the lower bound (\ref{eq22}) is better than (\ref{eq0}).
For other trees on 9 vertices, the lower bound (\ref{eq0}) is better.

\medskip

At the end of this section, we will discuss the spectral radius of the proper subgraph of regular graphs.
Note that the complete graph $K_{n}$ is a regular graph, and its spectral radius is $n-1$.
Let $K_{n}^{-}$ be the graph obtained from the complete graph $K_{n}$ by deleting an edge.
One can easily see that the spectral radius of $K_{n}^{-}$ is
$$\frac{n-3+\sqrt{n^{2}+2n-7}}{2}.$$

In order to prove Theorem \ref{main2}, we consider the spectral radius of nearly regular graphs.

\begin{lemma}\label{lem1}
  Let $G$ be a $k$-connected $\Delta$-regular graph of order $n$. If $H$ is a graph obtained from $G$ by deleting an edge, then
  \begin{equation}\label{eq13}
    \Delta-\lambda_{1}(H)>\frac{k^{2}}{(n-\Delta-1)(n-\Delta+2k-2)+nk^{2}}.
    \end{equation}
\end{lemma}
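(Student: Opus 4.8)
The plan is to mimic the structure of the proof of Theorem~\ref{main1}, but to exploit the fact that $H$ is obtained from a $\Delta$-regular graph $G$ by deleting a single edge, say $e = ab$. Let $x$ be the unit principal eigenvector of $H$, and let $\hat u, \hat v$ be vertices attaining the maximum and minimum entries of $x$. The key identity (\ref{eq1})/(\ref{eq2}) applied to $H$ gives
\begin{equation*}
  \Delta - \lambda_1(H) = \sum_{w \in V(H)}(\Delta - d_H(w))x_w^2 + \sum_{uv \in E(H)}(x_u - x_v)^2 = x_a^2 + x_b^2 + \sum_{uv \in E(H)}(x_u - x_v)^2,
\end{equation*}
since the only vertices of degree less than $\Delta$ in $H$ are $a$ and $b$, each of degree $\Delta - 1$. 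Here $n\Delta - 2m$ has effectively been replaced by $2$, but concentrated on the two specific vertices $a,b$ rather than on $\hat v$; this is the structural gain over Theorem~\ref{main1}. Note $x_a^2 + x_b^2 \geq 2x_{\hat v}^2$, so one still controls the ``defect'' term from below by $x_{\hat v}^2$ (with a factor $2$), and one never needs the crude lower bound $x_{\hat v}^2 \geq 1/n$.

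First I would handle the trivial case: if $d_H(\hat u) < \Delta$, i.e.\ $\hat u \in \{a,b\}$, then $\lambda_1(H)x_{\hat u} = \sum_{w \sim \hat u} x_w \leq (\Delta-1)x_{\hat u}$ forces $\lambda_1(H) \leq \Delta - 1$, and the right side of (\ref{eq13}) is easily checked to be less than $1$, so we are done. Otherwise $x_{\hat u}$ is attained at a vertex of degree $\Delta$. Next, as in the proof of Theorem~\ref{main1}, I would dispose of the sub-cases where $x_{\hat v}^2$ or $\sum_{i=1}^k x_{w_i}^2$ (with $w_1,\dots,w_k \in N_H(\hat v)$) is already large enough to conclude directly from the identity above together with (\ref{eq4}) applied with $a = 1$, $b = 2/k$ (replacing the role of $n\Delta-2m$ by $2$) — this yields $\Delta - \lambda_1(H) \geq \sum_{i=1}^k \tfrac{2}{2+k}x_{w_i}^2$. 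So we may assume $x_{\hat v}^2$ and $\sum x_{w_i}^2$ are both small, which forces $x_{\hat u}^2 \geq (1 - x_{\hat v}^2 - \sum x_{w_i}^2)/(n - k - 1)$ to be correspondingly large (the set $V(H)\setminus\{\hat v, w_1,\dots,w_k\}$ is nonempty since $k \leq n-2$).

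The heart of the argument is the Menger/Cauchy--Schwarz step bounding $\sum_{uv \in E(H)}(x_u - x_v)^2$ from below using $k$ internally disjoint $(\hat u,\hat v)$-paths $P_1,\dots,P_k$ in $H$ chosen so that the edges at $\hat u$ are distinct. Since $\hat u$ has degree $\Delta$ in $H$, there are $\Delta - k$ further neighbours of $\hat u$ outside these paths, giving $|V(P_1 \cup \cdots \cup P_k)| + (\Delta - k) \leq n$, hence $\sum_{i=1}^k(|V(P_i)| - 1) \leq n - \Delta + 2k - 2$, and therefore $\sum_{uv \in E(H)}(x_u - x_v)^2 \geq \tfrac{k^2}{n - \Delta + 2k - 2}(x_{\hat u} - x_{\hat v})^2$. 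Combining this with $x_a^2 + x_b^2 \geq 2x_{\hat v}^2 \geq x_{\hat v}^2$ and applying (\ref{eq4}) once more with $a = \tfrac{k^2}{n - \Delta + 2k - 2}$, $b = 1$, $p = x_{\hat u}$, $q = x_{\hat v}$ gives $\Delta - \lambda_1(H) \geq \tfrac{k^2}{(n-\Delta+2k-2) + k^2}\,x_{\hat u}^2$, and substituting the lower bound for $x_{\hat u}^2$ yields (\ref{eq13}) after simplification. The main obstacle I anticipate is purely bookkeeping: verifying that in the ``small'' sub-cases the explicit rational bounds on $x_{\hat v}^2$ and $\sum x_{w_i}^2$ feed through the final substitution to give exactly the stated denominator $(n-\Delta-1)(n-\Delta+2k-2) + nk^2$ — one has to choose the threshold values so that the two branches meet, just as in Theorem~\ref{main1}, but now with the ``$2$'' (or rather the pair $x_a^2+x_b^2$) in place of $n\Delta - 2m$. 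A minor subtlety worth checking is that $H$ remains connected (so that $\hat v$ has at least $k$ neighbours and Menger applies with parameter $k$); this holds because deleting one edge from a $k$-connected graph with $k \geq 1$ leaves it connected.
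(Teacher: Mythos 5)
Your overall skeleton (the identity $\Delta-\lambda_{1}(H)=x_{a}^{2}+x_{b}^{2}+\sum_{uv\in E(H)}(x_{u}-x_{v})^{2}$, the three-branch structure, and the Menger/Cauchy--Schwarz step) matches the paper's, but there is a fatal quantitative gap in where you anchor the ``neighbor-sum'' branch. You take $k$ neighbors of the \emph{minimum} vertex $\hat v$, so your exclusion set $\{\hat v,w_{1},\dots,w_{k}\}$ has only $k+1$ vertices and you get $x_{\hat u}^{2}\geq (1-x_{\hat v}^{2}-\sum x_{w_i}^{2})/(n-k-1)$. The target denominator $(n-\Delta-1)(n-\Delta+2k-2)+nk^{2}$ requires the factor $n-\Delta-1$ here, and since $k\leq\delta(G)=\Delta$ one has $n-k-1\geq n-\Delta-1$, with strict inequality whenever $k<\Delta$. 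Tracing your final substitution through, to reach the claimed bound you would need
\begin{equation*}
x_{\hat v}^{2}+\sum_{i=1}^{k}x_{w_{i}}^{2}\;\leq\;\frac{(k+1)k^{2}-(\Delta-k)(n-\Delta+2k-2)}{(n-\Delta-1)(n-\Delta+2k-2)+nk^{2}},
\end{equation*}
whose right-hand side is negative for $k<\Delta$ and $n$ large, so the two branches cannot meet. This is not bookkeeping: the approach structurally cannot produce $n-\Delta-1$. The paper's fix is to run this branch at an endpoint $a$ of the deleted edge, using all $\Delta-1$ of its remaining neighbors $u_{1},\dots,u_{\Delta-1}$ to get $\Delta-\lambda_{1}(H)\geq\frac{1}{\Delta}\sum_{i=1}^{\Delta-1}x_{u_{i}}^{2}$; then the excluded set $\{a,b,u_{1},\dots,u_{\Delta-1}\}$ has $\Delta+1$ vertices, the lower bound for the maximum entry $x_{\hat w}^{2}$ acquires the factor $n-\Delta-1$, and the constants cancel exactly (this also forces the separate treatment of $\Delta=n-1$, i.e.\ $K_{n}^{-}$, which you omit).

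Two further problems in the Menger step. First, your claim that deleting one edge from a $k$-connected graph with $k\geq1$ leaves it connected is false for $k=1$ (a cubic graph can have a bridge); the paper deliberately argues ``whether $H$ is connected or not.'' Second, $H=G-e$ is in general only $(k-1)$-connected, so Menger applied \emph{in $H$} guarantees only $k-1$ internally disjoint paths between two prescribed vertices, not $k$. The paper takes the $k$ paths in $G$ and then explicitly repairs the one path that may use the deleted edge (replacing $P_{k}$ by $P_{k}^{*}$ and re-running Cauchy--Schwarz using $x_{a}\geq x_{b}$); your proposal skips this case entirely. Moreover, if $H$ is disconnected your $\hat u$ and $\hat v$ (max and min of the eigenvector) may lie in different components with $x_{\hat v}=0$, so no $(\hat u,\hat v)$-paths exist in $H$ at all — another reason the paper routes the paths between the global maximum $\hat w$ and a deleted-edge endpoint and works with paths of $G$.
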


\begin{proof}
  Suppose that $H\cong G-\hat{u}\hat{v}$ where $\hat{u}\hat{v}$ is an edge of $G$.
  Thus, $d_{H}(\hat{u})=d_{H}(\hat{v})=\Delta-1$, and $d_{H}(w)=\Delta$ for $w\in V(H)\backslash\{\hat{u},\hat{v}\}$.
  If $\Delta=n-1$, then $G\cong K_{n}$ and $H\cong K_{n}^{-}$.
  Clearly, $$\Delta-\lambda_{1}(K_{n}^{-})=n-1-\frac{n-3+\sqrt{n^{2}+2n-7}}{2}>\frac{1}{n},$$ and the result follows.
  Assume that $\Delta\leq n-2$.
  Let $x$ be a unit principal eigenvector of $H$. As mentioned in (\ref{eq1}),
  \begin{equation}
    \lambda_{1}(H)=\sum_{w\in V(G)}d_{H}(w)x_{w}^{2}-\sum_{uv\in E(G)}(x_{u}-x_{v})^{2}.
  \end{equation}
  Therefore, we obtain that
  \begin{equation}\label{eq11}
    \Delta-\lambda_{1}(H)=\Delta\sum_{w\in V(G)}x_{w}^{2}-\sum_{w\in V(G)}d_{H}(w)x_{w}^{2}+\sum_{uv\in E(G)}(x_{u}-x_{v})^{2}
    =x_{\hat{u}}^{2}+x_{\hat{v}}^{2}+\sum_{uv\in E(G)}(x_{u}-x_{v})^{2}.
  \end{equation}
  Suppose that $N_{H}(\hat{u})=\{u_{1},u_{2},\ldots,u_{\Delta-1}\}$. By (\ref{eq4}) and (\ref{eq11}), we have
\begin{eqnarray}\label{eq14}
  \Delta-\lambda_{1}(H)\geq x_{\hat{u}}^{2}+\sum_{i=1}^{\Delta-1}(x_{u_{i}}-x_{\hat{u}})^{2}
  =\sum_{i=1}^{\Delta-1}\left(\frac{1}{\Delta-1}x_{\hat{u}}^{2}+(x_{u_{i}}-x_{\hat{u}})^{2}\right)
  \geq \frac{1}{\Delta}\sum_{i=1}^{\Delta-1}x_{u_{i}}^{2}.
\end{eqnarray}
Since $\sum_{uv\in E(G)}(x_{u}-x_{v})^{2}>0$, by (\ref{eq11}), the lemma holds if
$$x_{\hat{u}}^{2}+x_{\hat{v}}^{2}\geq \frac{k^{2}}{(n-\Delta-1)(n-\Delta+2k-2)+nk^{2}}.$$
Moreover, if
$$\sum_{i=1}^{\Delta-1}x_{u_{i}}^{2}>\frac{\Delta k^{2}}{(n-\Delta-1)(n-\Delta+2k-2)+nk^{2}},$$
the result follows immediately from (\ref{eq14}). So, in the following, we may assume that
\begin{equation}\label{eq15}
  x_{\hat{u}}^{2}+x_{\hat{v}}^{2}<\frac{k^{2}}{(n-\Delta-1)(n-\Delta+2k-2)+nk^{2}}
\end{equation}
and
\begin{equation}\label{eq16}
  \sum_{i=1}^{\Delta-1}x_{u_{i}}^{2}\leq \frac{\Delta k^{2}}{(n-\Delta-1)(n-\Delta+2k-2)+nk^{2}}.
\end{equation}
  Note that the average degree of $H$ is $\Delta-\frac{2}{n}$, and so $\lambda_{1}(H)>\Delta-\frac{2}{n}$.
  Suppose that $x_{\hat{u}}\geq x_{\hat{v}}$. It is easy to see that $x_{\hat{u}}>0,$ whether $H$ is connected or not.
  If $x_{\hat{u}}=\max\{x_{w}:w\in V(H)\}$,
  then $$\lambda_{1}(H)x_{\hat{u}}=\sum_{w\hat{u}\in E(G)}x_{w}\leq (\Delta-1)x_{\hat{u}},$$
  which implies that $\lambda_{1}(H)\leq \Delta-1$. But this contradicts the fact $\lambda_{1}(H)>\Delta-2/n$.
  Therefore, we obtain that $\max\{x_{w}:w\in V(H)\}>x_{\hat{u}}\geq x_{\hat{v}}$.
  Assume that $\hat{w}$ is a vertex in $H$ such that $x_{\hat{w}}=\max\{x_{w}:w\in V(H)\}$.
  Since $\Delta\leq n-2$, it follows from the assumptions (\ref{eq15}) and (\ref{eq16}) that
  \begin{equation}\label{eq17}
    x_{\hat{w}}\geq \frac{1-x_{\hat{u}}^{2}-x_{\hat{v}}^{2}-\sum_{i=1}^{\Delta-1}x_{u_{i}}^{2}}{n-\Delta-1}
    >\frac{n-\Delta+k^{2}+2k-2}{(n-\Delta-1)(n-\Delta+2k-2)+nk^{2}}.
  \end{equation}
  We claim that the eigenvector $x$ satisfies
  \begin{equation}\label{eq10}
    \sum_{uv\in E(H)}(x_{u}-x_{v})^{2}\geq \frac{k^{2}}{n+2k-\Delta-2}(x_{\hat{w}}-x_{\hat{u}})^{2}.
  \end{equation}
  Since $G$ is $k$-connected, Menger's Theorem shows that there are $k$ internally vertex disjoint paths from $\hat{w}$ to $\hat{u}$.
  Suppose that $P_{1},P_{2},\ldots,P_{k}$ are internally vertex disjoint $(\hat{w},\hat{u})$-paths in $G$,
  such that $|N_{G}(\hat{w})\cap V(P_{i})|=1$ for $1\leq i\leq k$.
  If $\hat{v}\notin \bigcup_{i=1}^{k}V(P_{i})$, then $P_{1},\ldots,P_{k}$ are also paths in $H$.
  As mentioned in the proof of Theorem \ref{main1}, we have
  $$\sum_{i=1}^{k}|V(P_{i})|\leq n+3k-\Delta-2.$$
  Therefore, it follows from Cauchy-Schwarz inequality that
  \begin{eqnarray*}
    \sum_{uv\in E(H)}(x_{u}-x_{v})^{2}&\geq& \sum_{i=1}^{k}\sum_{uv\in P_{i}}(x_{u}-x_{v})^{2}\\
     &\geq& \frac{1}{\sum_{i=1}^{k}(|V(P_{i})|-1)}\left(\sum_{i=1}^{k}\sum_{uv\in P_{i}}(x_{u}-x_{v})\right)^{2}\\
     &=&\frac{k^2}{\sum_{i=1}^k(|V(P_t)|-1)}(x_{\hat{w}}-x_{\hat{u}})^2\\
     &\geq& \frac{k^2}{n-\Delta+2k-2}(x_{\hat{w}}-x_{\hat{u}})^2,
  \end{eqnarray*}
  as claimed. Suppose that $\hat{v}\in V(P_{k})$ and $P_{k}=\hat{w}\cdots \hat{v}\hat{u}$.
   Let $P_{k}^{*}$ be the path obtained from $P_{k}$ by deleting the vertex $\hat{u}$, that is, $P_{k}^{*}=\hat{w}\cdots \hat{v}$.
   Clearly, $P_{1},\ldots,P_{k-1},P_{k}^{*}$ are also paths in $H$. It follows that
   $$|V(P_{k}^{*})|+\sum_{i=1}^{k-1}|V(P_{i})|\leq n+3k-\Delta-3<n+3k-\Delta-2.$$
   A similar argument, using Cauchy-Schwarz inequality, yields that
   \begin{eqnarray*}
    \sum_{uv\in E(H)}(x_{u}-x_{v})^{2}&\geq& \frac{1}{|V(P_{k}^{*})|-1+\sum_{i=1}^{k-1}(|V(P_{i})|-1)}\left(\sum_{uv\in P_{k}^{*}}(x_{u}-x_{v})+\sum_{i=1}^{k-1}\sum_{uv\in P_{i}}(x_{u}-x_{v})\right)^{2}\\
    &>&\frac{1}{n-\Delta+2k-2}\left(x_{\hat{w}}-x_{\hat{v}}+(k-1)(x_{\hat{w}}-x_{\hat{u}})\right)^{2}\\
     &\geq& \frac{k^2}{n-\Delta+2k-2}(x_{\hat{w}}-x_{\hat{u}})^2,
  \end{eqnarray*}
   where the last inequality holds as $x_{\hat{u}}\geq x_{\hat{v}}$. Therefore, the eigenvector $x$ satisfies the inequality (\ref{eq10}).
    Combining (\ref{eq4}), (\ref{eq11}) and (\ref{eq10}), we obtain that
    \begin{equation}\label{eq12}
    \Delta-\lambda_{1}(H)\geq x_{\hat{u}}^{2}+\frac{k^2}{n-\Delta+2k-2}(x_{\hat{w}}-x_{\hat{u}})^2
    \geq\frac{k^2}{n-\Delta+k^{2}+2k-2}x_{\hat{w}}^2.
  \end{equation}
Thus, the result follows directly by using (\ref{eq17}) in the above inequality, and the proof is completed.
\end{proof}

Theorem \ref{main2} is now a direct consequence of the above lemma.

\begin{proof}[\bf Proof of Theorem \ref{main2}]
Since $H$ is a proper subgraph of $G$, there exists a graph $H'$ satisfying the following
properties: (a) $H'$ is obtained from $G$ by deleting an edge; (b) $H$ is a subgraph of $H'$ ($H\cong H'$ is allowed).
According to the property (b), we obtain $\lambda_{1}(H)\leq \lambda_{1}(H')$. Combining the property (a) and Lemma \ref{lem1},
it follows that
$$\Delta-\lambda_{1}(H)\geq \Delta-\lambda_{1}(H')>\frac{k^{2}}{(n-\Delta-1)(n-\Delta+2k-2)+nk^{2}},$$
which completes the proof.
\end{proof}

\noindent\textbf{Remark.} In Theorem \ref{th3}, the supergraph $G$ is $k$-connected, where $k\geq 2$.
The connectivity restriction is relaxed to any positive integer $k$ in our result.
Moreover, we improve the bound in Theorem \ref{th3} when $k\geq 2$.
Set $\Phi_{1}=(k-1)^{2}(n-\Delta-1)(n-\Delta+2k-2)$ and $\Phi_{2}=k^{2}(n-\Delta)(n-\Delta+2k-4)$.
One can see that
$$\Phi_{2}-\Phi_{1}=(2k-1)(n-\Delta)^{2}+(3k^{2}-8k+3)(n-\Delta)+2(k-1)^{3}>0,$$
as $k\geq 2$ and $n-\Delta\geq 1$.
Indeed, the fact $\Phi_{2}>\Phi_{1}$ indicates that the lower bound (\ref{eq20}) in our result is better than the bound (\ref{eq21}) in Theorem \ref{th3}.

\section{Discussion}

The unique maximal connected subcubic bipartite graph is determined in the paper. As shown in Figure \ref{fig1}, the maximal graph $B_{n}$ is path-like.
Very recently, the asymptotic value for the spectral radius of $B_{n}$ is provided in \cite{Xue2022}, that is,
$$\lim_{n \to \infty}n^{2}(3-\lambda_{1}(B_{n}))=\pi^{2}.$$
Based on this approach, one problem arises: determine the maximal graph in $\mathcal{B}(n,\Delta)$ for $\Delta\geq 4$.
According to our observations, in order to determine the maximal graph, one often requires to find possible degree sequences.
If the above problem can not be solved, one can deal with possible degree sequences of the maximal graph in $\mathcal{B}(n,\Delta)$ for $\Delta\geq 4$.

\section*{Acknowledgements}
We are greatly appreciated to Professors Dragan Stevanovi\'{c} and Sebastian M. Cioab\u{a} for valuable suggestions and comments,
which are very helpful to improve the presentation of our paper.
This work was supported by National Natural Science Foundation of China (Nos. 12001498 and 11971445),
China Postdoctoral Science Foundation (No. 2022TQ0303) and Natural Science Foundation of Henan Province (No. 202300410377).

\section*{References}

\end{document}